\pgfplotsset{width=7cm,compat=1.8}
\DeclareFontFamily{OT1}{pzc}{}
\DeclareFontShape{OT1}{pzc}{m}{it}{<-> s * [1.10] pzcmi7t}{}
\DeclareMathAlphabet{\mathpzc}{OT1}{pzc}{m}{it}
\definecolor{db}{RGB}{0, 0, 130}
\def\E{\hskip.15ex\mathsf{E}\hskip.10ex}
\definecolor{rp}{rgb}{0.25, 0, 0.75}
\definecolor{dg}{rgb}{0, 0.5, 0}
\newcommand{\A}{\mathcal{A}}
\newcommand{\C}{\mathcal{C}}
\newcommand{\cC}{\mathcal{C}}
\newcommand{\cQ}{\mathcal{Q}}
\newcommand{\Op}{A}
\newcommand{\R}{\mathbb{R}}
\newcommand{\eps}{\varepsilon}
\newcommand{\F}{\mathcal{F}}
\renewcommand{\H}{\mathcal{H}}
\newcommand{\I}{\mathbbm{1}}
\newcommand{\N}{\mathbb{N}}
\newcommand{\EE}{\mathbb{E}}
\newcommand{\PP}{\mathbb{P}}
\newcommand{\cL}{\mathcal{L}}
\newcommand{\bqn}{\begin{equation}}
\newcommand{\eqn}{\end{equation}}
\newcommand{\bqne}{\begin{equation*}}
\newcommand{\eqne}{\end{equation*}}
\renewcommand{\phi}{\varphi}
\newcommand{\wt}{\widetilde}
\renewcommand{\ge}{\geqslant}
\renewcommand{\le}{\leqslant}
\newcommand{\CM}{\H_\mu}
\newcommand{\nn}{\nonumber}
\numberwithin{equation}{section}
\DeclareMathOperator{\law}{Law}%
\newcommand{\customlabel}[2]{%
   \protected@write \@auxout {}{\string\newlabel {#1}{{#2}{\thepage}{#2}{#1}{}}}%
   \hypertarget{#1}{#2\hspace{-0.14cm}}
}
    \let\Cref\crtCref
    \let\cref\crtcref
\theoremstyle{definition}
\newtheorem{definition}{Definition}[section]
\theoremstyle{remark}
\newtheorem{remark}[definition]{Remark}
\theoremstyle{plain}
\newtheorem{corollary}[definition]{Corollary}
\newtheorem{lemma}[definition]{Lemma}
\newtheorem{proposition}[definition]{Proposition}
\newtheorem{theorem}[definition]{Theorem}
\newtheorem{assumption}[definition]{Assumption}
\crefname{definition}{Definition}{Definitions}
\crefname{notation}{Notation}{Notations}
\crefname{hypothesis}{Hypothesis}{Hypotheses}
\crefname{remark}{Remark}{Remarks}
\crefname{example}{Example}{Examples}
\crefname{corollary}{Corollary}{Corollaries}
\crefname{lemma}{Lemma}{Lemmas}
\crefname{proposition}{Proposition}{Propositions}
\crefname{theorem}{Theorem}{Theorems}
\crefname{assumption}{Assumption}{Assumptions}
\crefname{conjecture}{Conjecture}{Conjectures}
\author{Lukas Anzeletti\footnote{TU Wien ASC, Wiedner Hauptstrasse 8-10, 1040 Wien;   \texttt{lukas.anzeletti@asc.tuwien.ac.at}.} \and \setcounter{footnote}{3}
Oleg Butkovsky\footnote{Weierstrass Institute (WIAS), Berlin; 
Institut f\"ur Mathematik, Humboldt-Universit\"at zu Berlin; Simons Laufer Mathematical Sciences Institute (MSRI), Berkeley;~   \texttt{oleg.butkovskiy@gmail.com}.} \and
M\'at\'e Gerencs\'er\footnote{TU Wien ASC, Wiedner Hauptstrasse 8-10, 1040 Wien;~   \texttt{mate.gerencser@asc.tuwien.ac.at}.}\and
Alexander Shaposhnikov\footnote{\texttt{shal1t7@mail.ru}}
}
\title{Uniqueness for stochastic differential equations in Hilbert spaces with irregular drift}
\begin{document}
\date{February 13, 2026}
\maketitle

\begin{abstract}
We  present a versatile  framework to study strong existence and uniqueness for stochastic differential equations (SDEs) in Hilbert spaces with irregular drift. We consider an SDE in a separable Hilbert space $H$,
\begin{equation*}
dX_t= (\Op X_t + b(X_t))dt +(-\Op)^{-\gamma/2}dW_t,\quad X_0=x_0 \in H,
\end{equation*}
where $A$ is a self-adjoint negative definite operator with purely atomic spectrum, $W$ is a cylindrical Wiener process, $b$ is $\alpha$-H\"older continuous
function $H\to H$, and a nonnegative parameter $\gamma$  such that the stochastic convolution takes values in $H$. We show that this equation has a unique strong solution  provided that  $\alpha > \alpha^*(\gamma)$, with an explicit function $\alpha^*$ that takes values in $(0,1)$ for all $\gamma\in[0,3)$. This substantially extends the seminal work of Da Prato and Flandoli (2010) as no structural assumption on $b$ is imposed. The range of admissible $\alpha$ is also extended. To obtain this result, we do not use infinite-dimensional Kolmogorov equations but instead develop a new technique combining L\^e's theory of stochastic sewing in Hilbert spaces, Gaussian analysis, and a method of Lasry and Lions for approximation in Hilbert spaces.
\end{abstract}

\setcounter{tocdepth}{2}
\renewcommand\contentsname{}
\vspace{-1cm}

\tableofcontents

\section{Introduction}

While regularization by noise is well understood in finite-dimensional stochastic differential equations, much less is known in the infinite-dimensional setting. In this article, we combine  L\^e's stochastic sewing theory \cite{Leinitial,Le} with an infinite-dimensional approximation technique due to Lasry and Lions \cite{LasryLions} to establish strong well-posedness of stochastic evolution equations of the form
\begin{equation} \label{eq:mainSDE}
    dX_t= (\Op X_t + b(X_t))dt +(-\Op)^{-\gamma/2}dW_t,\quad X_0=x_0 \in H,
\end{equation}
see \cref{thm:main}.
Here $(H,\langle\cdot,\cdot\rangle_H)$ is a separable Hilbert space, $A:D(A)\subset H\to H$ is a self-adjoint negative definite operator with purely atomic spectrum, 
$b \in \cC^\alpha(H,H)$ for some $\alpha\in(0,1]$,
$W$ is an $H$-cylindrical Brownian motion on a complete filtered probability space
$(\Omega,\mathcal{F},(\mathcal{F}_t),\mathbb{P})$,
and $\gamma\geqslant 0$ is a parameter chosen so that the stochastic convolution $Z$ (a solution to the linearized version of \eqref{eq:mainSDE} with $x_0=0$, see \eqref{eq:Z}) takes values in $H$, see \cref{ass:lambdagamma} below.

As a corollary, we obtain regularization by noise for the stochastic heat equation with a general irregular drift. Namely, choosing in the above setting $H=L^2([0,1]^d)$, $d\in\N$, the operator $A$ to be the Dirichlet Laplacian, we see that equation \eqref{eq:mainSDE} reads as
\begin{equ}\label{eq:main-app}
dX_t= \Delta X_t  dt + b(X_t)dt +(-\Delta)^{-\gamma/2}dW_t,\quad X_0=x_0.
\end{equ}
For $\gamma\ge0$ with $\gamma>d/2-1$ and $1\leqslant d \leqslant 7$, \cref{cor:maincorollary} identifies a regime of $\alpha$ so that this equation has a unique strong solution.
This significantly improves upon the classical paper of Da Prato and Flandoli \cite{FlandoliDaPrato}, as well as more recent works \cite{AddonaBignamini2023,AddonaBignamini2024,AddonaMasieroPriola}, which impose substantial additional restrictions on the drift $b$ whenever $d \ge 2$.

Indeed, let us recall that Da Prato and Flandoli \cite{FlandoliDaPrato} show that equation \eqref{eq:main-app} has a unique strong solution under, among others, the assumption
\begin{equation}\label{daprf}
\sum_{n=1}^\infty \frac1{\lambda_n} \|\langle b, e_n\rangle_H\|_{\C^\alpha(H,\R)}^2<\infty,
\end{equation}
where $(\lambda_n)$ is a sequence of eigenvalues of $-\Delta$. Since $\|\langle b, e_n\rangle_H\|_{\C^\alpha(H,\R)}\le \|b\|_{\C^\alpha(H,H)}<\infty$ and $\lambda_n\sim n^{2/d}$. this condition is harmless for $d=1$. However, for $d \ge 2$, the eigenvalues $\lambda_n$ do not decay fast enough, so \eqref{daprf} can only be satisfied for very special drifts $b$. The paper \cite{FlandoliDaPrato} uses an infinite-dimensional version of the classical Zvonkin transformation method  \cite{ver80,zvonkin74}, and this condition seems unavoidable when bounding the second derivative of the Zvonkin transform.

Certain attempts to relax condition \eqref{daprf} were made in a number of subsequent papers \cite{AddonaBignamini2023,AddonaBignamini2024,AddonaMasieroPriola}.  
However,  for the stochastic heat equation \eqref{eq:main-app}, these papers still require a certain decay in $n$ of the H\"older norms $   
\|\langle b, e_n\rangle_H\|^2_{\C^\alpha(H,\R)}$ as long as $d\ge2$. These articles also rely on extensions of the Zvonkin method.

A main achievement of the present article is that we completely remove any additional structural assumptions on the drift. We impose only that $b$ is H\"older continuous, see \cref{thm:main}. A notable feature of our approach is that the toolbox is entirely different from that of the existing literature on stochastic evolution equations with irregular drift \cite{FlandoliDaPrato,DaPratoFlandoli2024,DFPR13,AddonaBignamini2023,AddonaMasieroPriola,AddonaBignamini2024,Priola,Priolaerratum}. In particular, our method does not rely on the Zvonkin transformation and thus avoids the associated technical difficulties. Instead, we use stochastic sewing in Hilbert spaces, a tool introduced by L\^e \cite{Leinitial, Le}.

The stochastic sewing strategy for \eqref{eq:mainSDE} in infinite dimensions runs into a few problems.
One basic Gaussian estimate used in \cite{Leinitial} and many following works is 
\begin{equation*}
\big|\EE\big(f(x+Z)-f(Z)\big)\big|\lesssim \|f\|_{L^\infty} \|\cQ^{-1/2}x\|,    
\end{equation*} where $Z\sim \mathcal{N}(0,\cQ)$. In other words, if $\cQ$ is nondegenerate and the dimension is finite, one bounds the Lipschitz norm of $\EE f(\cdot+Z)$ by the supremum norm of $f$.
In infinite dimensions, $\|\cQ^{-1/2}x\|=\infty$ unless $x$ lies in the Cameron-Martin space.
Ensuring that in all applications of this bound the perturbations lie in the Cameron-Martin space necessitates additional regularization and leads to different power counting in infinite dimensions.
Once such a Lipschitz estimate is obtained, another key difficulty is how to properly extend the classical finite-dimensional theory of interpolation to the infinite-dimensional context. Indeed, if we want to prove a result about a H\"older continuous function $f\colon \R^d \to \R^d$, we can decompose it into a sum of a Lipschitz function $P_\lambda f$ and a bounded function $f - P_\lambda f$, where $P_\lambda$ is a Gaussian  kernel with variance $\lambda I_d$. We then apply the result separately to the Lipschitz and bounded parts, obtaining a free parameter $\lambda$. By tuning $\lambda$, we can then obtain the desired result for $f$. Performing this procedure directly in infinite dimensions is impossible, since $P_\lambda$ is not defined. Instead, we overcome this problem by using an approximation method developed by Lasry and Lions \cite{LasryLions}. Lastly, in order to optimize the regime of $\gamma$ and $\alpha$, we measure time regularity of the drift term both as a process taking values in $H$ as well as considering it as a process with values in a Sobolev subspace thereof. The regime is then identified by finetuning the precise regularity of the Sobolev space.

There are various aspects that make the stochastic sewing approach appealing. First, a natural next step of the theory would be, in the spirit of Krylov-R\"ockner \cite{KrylovRoeckner}, to allow time-dependence in the drift, i.e. replacing $b(X_t)$ by $b(t,X_t)$ in \eqref{eq:mainSDE} and assuming e.g. $b\in L^q([0,1],\cC^\alpha(H,H))$ with appropriate conditions on $q$ and $\alpha$.
Based on \cite{GaleatiGerencser,butkovskygallay}
the tools to handle such time-dependence in the stochastic sewing framework are readily available.
The treatment of time-dependent drift via the Kolmogorov equations at the very least requires moving from the elliptic to parabolic theory, which could be technically demanding. It is worth noting that in the regime $q<2$ even in the finite dimensional case it is a fairly recent result to derive well-posedness via PDE methods, see \cite{HuWeiYuan}.
Second, there is considerable interest in stochastic evolution equations driven by infinite-dimensional fractional Brownian motions, see e.g. \cite{DuncanMaslowski,MaslowskiNualart,TindelTudorViens, HuNualart,CoupekMaslowski}.
It is natural to pursue the possible regularization by noise effects in that context. A powerful feature of stochastic sewing is that it can extend to such non-Markovian settings with ease, while treating fractional noise with Kolmogorov equations seems out of reach even in the finite dimensional case.

Let us also mention briefly that, in the special case where the drift $b$ is of Nemitskii type, that is,
$b(x)(z):=f(x(z))$, for $f\in\C^\alpha(\R^d,\R)$, $x\in L^2([0,1]^d)$, $z\in[0,1]^d$, regularization by noise for the stochastic heat equation \eqref{eq:main-app} is well understood, and one can consider much less regular drifts than in the general case; $f$ can even be a Schwartz distribution. In particular, for $d=1$, strong existence and uniqueness are known for $\alpha > -1$ \cite{ABLM,D24}, while weak existence and uniqueness are known for $\alpha > -3/2$ \cite{BM24}.

\paragraph{The work \cite{AddonaBignamini2025}.} The first version of this paper was submitted to arXiv on 31~December 2025 and was restricted to the regime $\gamma\in[0,1)$, in particular it could handle the example \eqref{eq:main-app} in dimensions $d=1,2,3$. Shortly before its conclusion, on 22 December 2025, the work \cite{AddonaBignamini2025} by Addona-Bignamini-Orrieri-Scarpa appeared on arXiv independently. Therein, the condition \eqref{daprf} is also removed, and the almost identical condition $\gamma\in[0,1]$ is posed (cf. (H2) therein imposing  $\gamma/2\equiv\delta\leqslant1/2$), leading to the same scope in terms of dimensions for \eqref{eq:main-app}, but somewhat weaker conditions on $\alpha$. %Our bounds on the H\"older regularity of the drift $b$ coincide with their bound as $\gamma\searrow\gamma_{crit}:=d/2-1$. For $\gamma>\gamma_{crit}$ the conditions on the exponents $\alpha$ were slightly weaker in \cite{AddonaBignamini2025} compared to the first version of this paper and both approaches could only handle \eqref{eq:main-app} in dimensions $d=1,2,3$. However, in the current version of our paper, we improved the range of admissible $\alpha$, which allows us to be able to prove uniqueness and existence to \eqref{eq:main-app} for certain $\alpha$ in dimensions $d=1,2,3,4,5,6,7$. 
In the present version of our paper (which was submitted to arXiv on 13 February 2026), the range of exponents is significantly improved, allowing a nontrivial range of $\alpha$ for all $\gamma\in[0,3)$.
%, as opposed to the condition $\gamma\leqslant1$ in \cite{AddonaBignamini2025} (cf. (H2) therein imposing  $\gamma/2\equiv\delta\leqslant1/2$).
When it comes to the example \eqref{eq:main-app}, 
%this leads to the scope of \cite{AddonaBignamini2025} being $d=1,2,3$, while
our result covers $1\leqslant d\leqslant 7$. For fixed $\gamma$ (or in the context of \eqref{eq:main-app}, fixed $d$), the condition on $\alpha$ is also improved: for example, in the case of $d=3$, \cref{cor:maincorollary} requires the more relaxed condition $\alpha>1/3$ on the H\"older exponent as opposed to $\alpha>2/3$ in \cite{AddonaBignamini2025}.

\paragraph{Convention on constants.} Throughout the paper, $C$ denotes a positive constant whose value may change from line to line; its dependence is always specified in the corresponding statement.

\paragraph{Acknowledgments.}
LA gratefully acknowledges that this research was funded in whole or in part by the Austrian Science Fund (FWF) [10.55776/STA119]. For open access purposes, the author has applied a CC BY public copyright license to any author-accepted manuscript version arising from this submission.
OB acknowledges funding by the Deutsche Forschungsgemeinschaft (DFG, German Research Foundation) --- CRC TRR SFB 388 ``Rough Analysis, Stochastic Dynamics and Related Fields'' - Project ID 516748464, subproject B08.  This paper is based upon work supported by the National Science Foundation under Grant No. DMS-2424139, while OB was in
residence at the Simons Laufer Mathematical Sciences Institute in Berkeley, California, during the Fall 2025 semester. LA, OB, and AS are very grateful to SLMath for creating a very good working atmosphere and providing excellent working conditions.
 MG was funded by the European Union (ERC, SPDE, 101117125). Views and opinions expressed
are however those of the author(s) only and do not necessarily reflect those of the European Union
or the European Research Council Executive Agency. Neither the European Union nor the granting
authority can be held responsible for them.

\section{Main results}

Let us introduce some basic notation.  For two normed spaces $(E,\|\cdot\|_E)$ and $(E',\|\cdot\|_{E'})$, a function $f\colon E \rightarrow E'$ and  $\alpha \in (0,1]$, we define the supremum and H\"older norms of $f$ as usual by 
\begin{align*}
\|f\|_\infty\coloneqq \sup_{x\in E} \|f(x)\|_{E'};\quad 
         [f]_{\cC^\alpha}\coloneqq \sup_{x \neq y} \frac{\|f(x)-f(y)\|_{E'}}{\|x-y\|_E^\alpha};\quad  \|f\|_{\cC^\alpha}\coloneqq \|f\|_\infty+[f]_{\cC^\alpha}.
\end{align*}
We say $f \in \cC^\alpha(E,E')$ if $\|f\|_{\cC^\alpha}<\infty$. For $m \in [1,\infty]$ and a probability space $(\Omega,\mathcal{F},\mathbb{P})$, the norm on $L^m(\Omega)$ is denoted by $\|\cdot\|_{L^m(\Omega)}$.

We fix a separable Hilbert space $(H,\langle\cdot,\cdot\rangle_H)$. First, we state our main assumptions on the operator $A$ and the parameter $\gamma$. 
\begin{assumption} \label{ass:A}
$A:D(A)\subset H\to H$ is a self-adjoint negative definite operator with purely atomic spectrum.
\end{assumption}
It follows from \cref{ass:A} that there exists  an orthonormal basis of $H$ consisting of eigenvectors of $-A$. We denote it by $(e_k)_{k\in\N}$ and let $(\lambda_k)_{k\in\N}$ be the corresponding sequence of positive eigenvalues. That is, one has
\begin{align} \label{eq:OpEV}
\Op e_k=-\lambda_k e_k,\quad k\in\N.
\end{align}
We suppose that the parameter $\gamma\ge0$ satisfies the following condition.
\begin{assumption} \label{ass:lambdagamma}
Assume
\begin{equation}\label{eq:gamma-condition}
        \sum_{k\in\N} \lambda_k^{-1-\gamma}<\infty.
    \end{equation}
\end{assumption}

Introduce now the stochastic convolution
\begin{equ}\label{eq:Z}
    Z_t:=\int_0^t e^{(t-r)\Op} (-\Op)^{-\gamma/2}dW_r,\quad t\geqslant0.
\end{equ}
Writing $W_t=\sum_{k\in\N} e_k\beta^k_t$, where $\beta^k$ is a sequence of independent one-dimensional Brownian motions, we see that
\begin{equ}\label{eq:mild-solution}
    \langle Z_t,e_k\rangle_H=\int_0^t e^{-(t-s)\lambda_k} \lambda_k^{-\gamma/2} d\beta^k_s \sim \mathcal{N}(0,\frac12\lambda_k^{-1-\gamma} (1-e^{-2t\lambda_k})).
\end{equ}
Therefore, $Z_t$ is an $H$-valued Gaussian random variable with mean $0$ and covariance operator 
\begin{equ}\label{eq:covariance-Z}
\mathcal{Q}_t=\frac{1}{2}(-\Op)^{-1-\gamma}(\mathrm{Id} - e^{2t\Op}),
\end{equ}
which by  \cref{ass:lambdagamma} is trace class on $H$. Moreover, $t\mapsto Z_t$ is continuous in $H$ almost surely.

Now we are ready to define the notion of a solution to our SDE. Without loss of generality, we consider the equation on the time horizon $[0,1]$.
\begin{definition}\label{def:sol}
Let $X\colon\Omega\times[0,1]\to H$ be a jointly measurable continuous process. We say that  $X$ a (mild) solution to SDE \eqref{eq:mainSDE} with the initial condition $x_0\in H$, if almost surely for all $t\in[0,1]$
\begin{equation*}
X_t= e^{t\Op}x_0+\int_0^t e^{(t-r)\Op}b(X_r) dr + Z_t.
\end{equation*}
\end{definition}

As usual, we define a \textit{weak solution} to \eqref{eq:mainSDE} as a couple $(X,W)$ on a complete filtered probability space $(\Omega, \F, (\F_t)_{t\in[0,1]},\PP)$ such that $W$ is an $( (\F_t)_{t\in[0,1]})$-cylindrical Brownian motion, $X$ is adapted to $(\F_t)$ and solves \eqref{eq:mainSDE} in the sense  of \cref{def:sol}. A weak solution $(X,W)$ is called a \textit{strong solution}  if $X$ is adapted to the augmented filtration generated by $W$. We say that  \textit{pathwise (strong) uniqueness} holds for \eqref{eq:mainSDE} if for any two weak solutions of \eqref{eq:mainSDE}  $(X,W)$ and $(\overline X,W)$  defined on the same filtered probability space one has $\PP(X_t=\overline X_t \text{ for all $t\in[0,1]$})=1$. 

The main result of our paper is strong existence and uniqueness of solutions to \eqref{eq:mainSDE}. The proof is presented in \cref{subsec:mainproof}.
\begin{theorem} \label{thm:main}
Let $x_0\in H$, $\alpha \in (0,1]$, $\gamma\geqslant 0$, $b \in \mathcal{C}^\alpha(H,H)$. Assume that \cref{ass:A,ass:lambdagamma} are satisfied and suppose further that 
\begin{equation} \label{eq:gammaupperbound}
        %\gamma<\frac{\alpha}{2-\alpha}.
        \alpha>\begin{cases}
            \dfrac{\gamma}{\gamma+1}&\text{if }0\leqslant\gamma\leqslant 1\vspace{0.2cm}
            \\
            \dfrac{1}{2}&\text{if }1\leqslant\gamma\leqslant \dfrac{3}{2}\vspace{0.2cm}
            \\
            \sqrt{\dfrac{\gamma-1}{2}}&\text{if }\dfrac{3}{2}\leqslant\gamma<3.
        \end{cases}
\end{equation}
%\begin{enumerate}[(i)]
%\item Let $x_0 \in H$.
Then there exists a strong solution to the SDE \eqref{eq:mainSDE} and pathwise uniqueness holds.
%\item \label{en:main2}Let $m\ge2$. Then there exists a constant $C=C(\|b\|_{\cC^\alpha},T,\alpha,\gamma,m)$ such that for any $x_0,y_0 \in H$, and two weak solutions $(X,W),(Y,W)$ defined on the same filtered probability space with respective initial conditions $x_0,y_0$, we have
%    \begin{align}
 %       \sup_{t \in [0,T]}\big\|\|X_t-Y_t\|_H\big\|_{L^m(\Omega)}\leqslant C \|x_0-y_0\|_H.
  %  \end{align}
%\end{enumerate}   
\end{theorem}
\begin{figure}[ht]
\centering
\begin{tikzpicture}
\begin{axis}[
    axis lines=left,
    xmin=0, xmax=3.1,
    ymin=0, ymax=1.05,
    xlabel={$\gamma$},
    ylabel style={rotate=-90, anchor=south},
    ylabel={$\alpha$},
    samples=200,
    thick,
]

% ---------- top boundary ----------
\addplot[name path=top, draw=none] {1.05};

% ---------- piecewise lower boundary ----------
\addplot[name path=c1, draw=none, domain=0:1] {x/(x+1)};
\addplot[name path=c2, draw=none, domain=1:1.5] {0.5};
\addplot[name path=c3, draw=none, domain=1.5:3] {sqrt((x-1)/2)};
\addplot[name path=c4, draw=none, domain=0:1] {1};
\addplot[name path=c5, draw=none, domain=1:1.5] {1};
\addplot[name path=c6, draw=none, domain=1.5:3] {1};
% ---------- shaded regions ----------
\addplot[
    fill=gray!30,
    fill opacity=0.6,
    draw=none
] fill between[of=c4 and c1];

\addplot[
    fill=gray!30,
    fill opacity=0.6,
    draw=none
] fill between[of=c5 and c2];

\addplot[
    fill=gray!30,
    fill opacity=0.6,
    draw=none
] fill between[of=c6 and c3];

% ---------- redraw curves ----------
\addplot[thick, domain=0:1] {x/(x+1)};
\addplot[thick, domain=1:1.5] {0.5};
\addplot[thick, domain=1.5:3] {sqrt((x-1)/2)};

\end{axis}
\end{tikzpicture}
\caption{Regime of $\alpha,\gamma$ for pathwise uniqueness}
\end{figure}
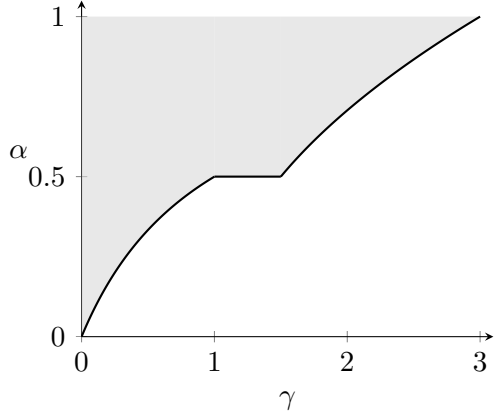

Concerning the main example \eqref{eq:main-app}, \cref{thm:main} implies the following.

\begin{corollary} \label{cor:maincorollary} 
Let $d\in\{1,2,\ldots,7\}$, $\alpha\in(0,1]$, $b\in\C^\alpha(L^2([0,1]^d),L^2([0,1]^d))$. 
Consider the  stochastic heat equation \eqref{eq:main-app} with the homogeneous Dirichlet boundary conditions.
Assume that $\alpha>\alpha^*$ given in the following table:
\[
\begin{array}{|c|c|c|c|c|c|c|c|}
\hline
d & 1 & 2 & 3 & 4 & 5 & 6 & 7 \\ \hline
\alpha^*
& 0
& 0
& \tfrac{1}{3}
& \tfrac{1}{2}
& \tfrac{1}{2}
& \tfrac{1}{\sqrt{2}}
& \tfrac{\sqrt{3}}{2}\\\hline
\end{array}
\]
%that \textbf{one} of the following holds: 
%\begin{enumerate}[(i)]
%\item $d=1$, $\alpha \in (0,1]$ and $\gamma \in [0,\frac{\alpha}{2-\alpha})$;
% \item $d=2$, $\alpha \in (0,1]$ and $\gamma \in (0,\frac{\alpha}{2-\alpha})$;
% \item $d=3$, $\alpha \in (2/3,1]$ and $\gamma \in (1/2,\frac{\alpha}{2-\alpha})$.
%\end{enumerate}
Then there exists a $\gamma$ satisfying \cref{ass:lambdagamma} and \eqref{eq:gammaupperbound}, in which case \eqref{eq:main-app} has a unique strong solution.
\end{corollary}

\begin{remark}
If one equips \eqref{eq:main-app} with other boundary conditions, such as periodic, homogeneous Neumann or Robin, finitely many eigenvalues of the Laplacian can be nonnegative, violating \cref{ass:A}. The natural remedy is to let $A=\Delta-K\mathrm{id}$ with some large enough $K$ and replace $b$ by $\tilde b:=b+K\mathrm{id}$. Although $\tilde b$ does not immediately fall in the scope of \cref{thm:main} since it is not bounded, it is rather straightforward to modify the proof to accommodate such nonlinearities. We leave details to the interested reader.
\end{remark}

Note that if $d=1$, existence and uniqueness for the stochastic heat equation  \eqref{eq:main-app} is known for bounded measurable functions $b:L^2([0,1])\to L^2([0,1])$ for almost all initial conditions \cite{DFPR13,DFRV16}. This case remains beyond the scope of the methods developed in the present work.

Another interesting line of research is the extension to drifts $b$ that are H\"older continuous functions $E \to E$ for the Banach space $E := \C([0,1])$. For $d=1$, strong existence and uniqueness of \eqref{eq:main-app} with such drifts was shown in \cite{Cerrai13}. The case $d \ge 2$ remains open.

\begin{remark}
A perhaps unexpected feature of the result is that the regularization effect of $Z$ is not simply the time regularity of the process as an $H$-valued path. For example, in the context of \cref{cor:maincorollary}, it is easy to see that almost surely $Z\in \cap_{\eps>0}\cC^{\mu-\eps}([0,1],H)\setminus \cC^{\mu}([0,1],H)$ with $\mu=\gamma-(d/2-1)\vee 0$. 
A na\"ive analogy with finite dimensions would be the guess that $Z$ has the same regularizing effect as a fractional Brownian motion with Hurst parameter $\mu$, for which the well-known condition \cite{CG16} for strong well-posedness is 
\begin{equ}
    \alpha>1-\frac{1}{2\mu}.
\end{equ}
\end{remark}

\section{Proofs}

We split the main section of the paper into two parts. First, in \cref{subsec:averaging} we establish several results that capture the averaging effect of infinite-dimensional Gaussian measures. An important tool in this part is the Lasry-Lions approximation theory in Hilbert spaces. Then, using the stochastic sewing lemma in Hilbert spaces, we obtain a number of key integral bounds in \cref{subsec:mainproof}. The rest of the subsection is dedicated to the proof of \cref{thm:main}. To do so, we first prove existence and uniqueness to \eqref{eq:mainSDE} under assumptions that involve additionally introduced parameters (see \cref{a:manyineq}) to additionally measure regularity in Sobolev-scale. Then, the proof of \cref{thm:main} follows by showing that these assumptions coincide with the assumptions of \cref{thm:main}.

\subsection{Averaging properties of Gaussian measures} \label{subsec:averaging}

This subsection is structured as follows. We first give a brief summary of the definition of a Gaussian measure on a Hilbert space and the construction of its Cameron-Martin space. For further details, see \cite{Bogachev, Hairer}. We then state a general result on the averaging of a Gaussian measure $\mu$ on a Hilbert space against a bounded function $f$, yielding a Lipschitz estimate in directions of the associated Cameron-Martin space. This result, together with an interpolation argument (\cref{lem:interpolationShapo}), then allows us to quantify the averaging effect for a H\"older continuous function $f$ in \cref{lem:interpolation} and \cref{cor:Holderreg}.

Let $H$ be a separable Hilbert space and assume that the operator $A:D(A)\subset H\to H$ satisfies \cref{ass:A}. For $x\in H$ with $x=\sum_{k=1}^\infty x_k e_k$, and $\rho\ge0$, denote
\begin{equation*}
 \|x\|_{H^\rho}^2=\|(-A)^{\frac\rho2}x\|^2_{H}= \sum_{k=1}^\infty \lambda_k^\rho x_k^2
\end{equation*}
and let $H^\rho\subset H$ be the corresponding Sobolev space: 
\begin{equation*}
H^\rho:=\{x\in H:\,\, \|x\|_{H^\rho}<\infty\}.
\end{equation*}

Let $\mu$ be a centered   Gaussian measure $\mu$ on $H$, that is, $\mu$ is a Borel measure on $H$ and the pushforward of $\mu$ by any continuous linear functional is a centered Gaussian measure on $\mathbb{R}$. 
    Define the covariance operator $C_\mu\colon H\times H \rightarrow \mathbb{R}$ of $\mu$ by
\begin{align*}
    (h,k)\mapsto C_\mu(h,k)\coloneqq\int_{H} \langle h,x\rangle_H \langle k,x\rangle_H\, \mu(dx).
\end{align*}
%Furthermore, let $Q_\mu \colon H \rightarrow H$ be the operator such that $C_\mu(h,k)=\langle h,Q_\mu k\rangle_H$.

The Cameron-Martin space $\H_\mu$ of the measure $\mu$ is the completion of the linear subspace
    \begin{equation*}
        \{h \in H:\exists h^*\in H \,\,\text{such that } C_\mu(h^*,l)=\langle l,h\rangle_H \,\,\, \forall \, l \in H\}
    \end{equation*}
under the norm $\|\cdot\|_{\CM}$ induced by the inner product $\langle h,k\rangle_{\CM}:= C_\mu(h^*,k^*)$, which makes it into a Hilbert space.
That this quantity is well defined follows from the fact that given $h\in \H_\mu$, any two $h_1^*$ and $h_2^*$ with the above property satisfies $\langle h_1^*-h_2^*,x\rangle_H=0$ for $\mu$-almost every $x\in H$. It is easy to see from the definition, that if $C_\mu$ is diagonal, that is, 
\begin{equation*}
C_\mu(e_i,e_j)=\sigma_i^2 \I_{i=j},\quad i,j\in \N,
\end{equation*}
where each $\sigma_i>0$, $i\in\N$, then for $x=\sum_{i=1}^\infty x_ie_i\in H$ we have
\begin{equation}\label{hmunorm}
\|x\|_{\CM}^2=\sum_{i=1}^\infty \sigma_i^{-2}x_i^2.
\end{equation}
It is also immediate to see, that if a random variable $X$ has law $\mu$, then the  random variable $\langle h^*, X\rangle_H$ is Gaussian with mean $0$ and variance $\|h\|^2_{\CM}$.

For $h\in \H_\mu$, define the map $T_h: H\to H$ by $T_h(x):=x+h$. Then, by the Cameron-Martin theorem, the measure $T_h^\# \mu$ is absolutely continuous with respect to $\mu$ and the Radon-Nikodym derivative is given by 
\begin{equation}\label{eq:R-N}
\frac{d T_h^\# \mu}{d \mu}(x)=\exp\Big\{\langle h^*,x\rangle_H-\frac{1}{2}\|h\|_{\CM}^2\Big\},\quad x\in H.
\end{equation}

We begin with the first averaging lemma.
\begin{lemma} \label{lem:averagingproperties}
There exists a constant $C>0$ such that for any bounded measurable function $f\colon H\to H$ and any $h_1,h_2,h_3 \in \mathcal{H}_{\mu}$, one has
\begin{align}
&\Big\|\int_H f(x+h_1)-f(x+h_2) \mu(dx)\Big\|_{H}\leqslant C\|f\|_\infty \|h_1-h_2\|_{\CM} \label{eq:2point},\\
&\Big\|\int_H f(x+h_1)-f(x+h_2)-f(x+h_3)+f(x+h_3+h_2-h_1) \mu(dx)\Big\|_{H} \nonumber\\
    &\qquad\le C \|f\|_\infty \|h_3-h_1\|_{\CM} \|h_2-h_1\|_{\CM}. \label{eq:4point}
\end{align}
\end{lemma}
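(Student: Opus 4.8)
The plan is to reduce both inequalities to scalar $L^1(\mu)$ estimates on the Radon-Nikodym densities supplied by \eqref{eq:R-N}. Writing $D_h(x):=\exp\{\langle h^*,x\rangle_H-\tfrac12\|h\|_{\CM}^2\}$ for the density $dT_h^\#\mu/d\mu$, the change of variables $x\mapsto x+h$ gives $\int_H f(x+h)\,\mu(dx)=\int_H f(x)\,D_h(x)\,\mu(dx)$ for every $h\in\CM$. Since $f$ is $H$-valued and bounded while the relevant scalar combination $g$ of densities is real-valued, the triangle inequality for the Bochner integral yields $\|\int_H f(x)\,g(x)\,\mu(dx)\|_H\le\|f\|_\infty\int_H|g(x)|\,\mu(dx)$. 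Hence \eqref{eq:2point} follows from $\|D_{h_1}-D_{h_2}\|_{L^1(\mu)}\le C\|h_1-h_2\|_{\CM}$, and \eqref{eq:4point} from $\|D_{h_1}-D_{h_2}-D_{h_3}+D_{h_3+h_2-h_1}\|_{L^1(\mu)}\le C\|h_3-h_1\|_{\CM}\|h_2-h_1\|_{\CM}$, so it remains to prove these purely scalar bounds.

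For the first, I would set $g:=h_1-h_2$ and interpolate along the segment $k_t:=h_2+tg$, $t\in[0,1]$. Since $h\mapsto h^*$ is linear and $\tfrac12\|k_t\|_{\CM}^2$ is smooth in $t$, one finds $\partial_t D_{k_t}(x)=(\langle g^*,x\rangle_H-\langle k_t,g\rangle_{\CM})\,D_{k_t}(x)$, so that $D_{h_1}-D_{h_2}=\int_0^1(\langle g^*,x\rangle_H-\langle k_t,g\rangle_{\CM})D_{k_t}(x)\,dt$. The key observation is that $D_{k_t}\,\mu=T_{k_t}^\#\mu$, and under this tilted measure the recentered functional $\langle g^*,\cdot\rangle_H-\langle k_t,g\rangle_{\CM}$ has exactly the law of $\langle g^*,\cdot\rangle_H$ under $\mu$, i.e. $\mathcal N(0,\|g\|_{\CM}^2)$, because the deterministic shift $\langle g^*,k_t\rangle_H=\langle k_t,g\rangle_{\CM}$ cancels precisely. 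Consequently $\int_H|\langle g^*,x\rangle_H-\langle k_t,g\rangle_{\CM}|\,D_{k_t}(x)\,\mu(dx)=\sqrt{2/\pi}\,\|g\|_{\CM}$ independently of $t$, and integrating over $t\in[0,1]$ gives \eqref{eq:2point}.

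For \eqref{eq:4point} I would run the same idea in two parameters. With $a:=h_2-h_1$, $b:=h_3-h_1$ and $k_{s,t}:=h_1+sa+tb$, the four-point combination equals $\int_0^1\!\int_0^1\partial_s\partial_t D_{k_{s,t}}\,ds\,dt$ by the fundamental theorem of calculus in two variables. A direct differentiation gives $\partial_s\partial_t D_{k_{s,t}}=(u_a u_b-\langle a,b\rangle_{\CM})D_{k_{s,t}}$, where $u_a:=\langle a^*,x\rangle_H-\langle k_{s,t},a\rangle_{\CM}$ and $u_b$ is defined analogously. As before, under $D_{k_{s,t}}\mu$ the pair $(u_a,u_b)$ has the law of the centered Gaussian pair $(\langle a^*,\cdot\rangle_H,\langle b^*,\cdot\rangle_H)$ under $\mu$, whose covariances are $\|a\|_{\CM}^2,\|b\|_{\CM}^2$ and $\langle a,b\rangle_{\CM}$. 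Since $\langle a,b\rangle_{\CM}$ is exactly the mean of $u_au_b$ under this law, Cauchy-Schwarz bounds the inner integral by $\mathrm{Var}(u_au_b)^{1/2}$, and the Wick/Isserlis formula gives $\mathrm{Var}(u_au_b)=\|a\|_{\CM}^2\|b\|_{\CM}^2+\langle a,b\rangle_{\CM}^2\le 2\|a\|_{\CM}^2\|b\|_{\CM}^2$. This is again independent of $(s,t)$, so integrating over $[0,1]^2$ yields \eqref{eq:4point} with $C=\sqrt2$.

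The two Gaussian moment computations are routine; the genuinely important step — and the only place requiring care — is the cancellation of the deterministic drift under the tilted measure $T_{k}^\#\mu$, which is what renders the integrands $t$- (resp. $(s,t)$-)independent and produces the clean Cameron-Martin norms. The remaining technical point is to justify the differentiation under the integral sign and the interpretation of $\langle h^*,\cdot\rangle_H$ as a measurable linear functional for $h$ in the completion $\CM$; I would handle both by first arguing on the dense subspace where $h^*\in H$, using that the exponential densities and their derivatives admit $\mu$-integrable Gaussian-type dominating functions uniform in the parameters, and then passing to the limit.
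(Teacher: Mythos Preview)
Your argument is correct and takes a genuinely different route from the paper. The paper proceeds by case analysis: after reducing to $h_2=0$ (resp.\ $h_1=0$), it separates the regimes $\|h_i\|_{\CM}\le 1$ and $\|h_i\|_{\CM}\ge 1$, and in the small-norm case bounds the density differences via the elementary inequality $|e^a-1|\le(1\vee e^a)|a|$ together with H\"older's inequality and explicit Gaussian moment bounds; for \eqref{eq:4point} it further splits $1-e^{\zeta_{h_2}}-e^{\zeta_{h_3}}+e^{\zeta_{h_2+h_3}}$ into $(e^{\zeta_{h_2}}-1)(e^{\zeta_{h_3}}-1)$ plus a cross term. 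Your interpolation argument avoids all case distinctions: by differentiating $t\mapsto D_{k_t}$ (resp.\ $(s,t)\mapsto D_{k_{s,t}}$) and recognizing that under the tilted measure $D_{k}\mu=T_k^\#\mu$ the derivative becomes a centered Gaussian functional, the $L^1$ bound reduces to a single moment computation independent of the interpolation parameter. This yields explicit constants ($\sqrt{2/\pi}$ and $\sqrt{2}$) and is arguably cleaner; the paper's approach, on the other hand, is slightly more elementary in that it needs no calculus and no appeal to the Girsanov shift identity beyond the bare Cameron--Martin formula \eqref{eq:R-N}. The technical caveat you flag---interpreting $\langle h^*,\cdot\rangle_H$ for $h$ in the completion and justifying differentiation under the integral---is real but routine, and your proposed density-plus-domination argument handles it.
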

\begin{proof}
First, let us prove  \eqref{eq:2point}.
It suffices to consider the case $h_2=0$; the case $h_2\neq 0$ then easily follows by shifting $f$. Note that if $\|h_1\|_{\CM}\geqslant 1$, then \eqref{eq:2point} holds trivially. Henceforth we assume $\|h_1\|_{\CM}\le1$. Denote $\zeta_h(x):=\langle h^*,x\rangle_H-\frac{1}{2}\|h\|_{\CM}^2$. 
Using \eqref{eq:R-N} and the inequality
\begin{equation}\label{eq:very-triv}
     |e^{a}-1|\le (1\vee e^{a})|a|
\end{equation}
valid for any $a\in \R$, we derive
\begin{align}\label{2pointmany}
\left\|\int_H f(x+h_1)-f(x)\mu(dx)\right\|_{H}&=\left\|\int_H f(x)  T_{h_1}^\#\mu (dx)-\int_H f(x)\mu(dx)\right\|_{H}\nn\\
&=\left\|\int_H f(x)\Bigl(\frac{d T_{h_1}^\# \mu}{d \mu}(x)-1\Bigr) \mu(dx)\right\|_{H}\nn\\
&\leqslant \|f\|_\infty \int_H (1\vee e^{\zeta_{h_1}(x)}) |\zeta_{h_1}(x)|\mu(dx)\nn\\
&\leqslant \|f\|_\infty \|1\vee e^{\zeta_{h_1}(\cdot)}\|_{L^2(H,\mu)} \|\zeta_{h_1}(\cdot)\|_{L^2(H,\mu)}.
\end{align}

Recalling the variance of $\zeta_{h_1}$, we see that 
\begin{equation}\label{eq:constant1}
\|1\vee e^{\zeta_{h_1}(\cdot)}\|_{L^2(H,\mu)}^2
\le 1+\exp(\|h_1\|^2_{\CM})\le 4,
\end{equation}
since we consider the case $\|h_1\|_{\CM}\le1$. 
Again using the same arguments and the inequality $(a+b)^2\le 2 a^2 +2 b^2$ valid for any $a,b\in\R$ we get,
\begin{equation}\label{eq:constant2}
\|\zeta_{h_1}(\cdot)\|_{L^2(H,\mu)}^2\le 2 
 \|\langle h_1^*,\cdot\rangle \|_{L^2(H,\mu)}^2 +\frac12\|h_1\|_{\CM}^4 \le 3\|h_1\|_{\CM}^2.
\end{equation}
Combining this with \eqref{eq:constant1} and substituting into \eqref{2pointmany}, we get  \eqref{eq:2point}.

Now we move on to the proof of  \eqref{eq:4point}. We note again that it suffices to prove this inequality  for $h_1=0$ as the case $h_1\neq 0$ then easily follows by shifting $f$. If $\|h_2\|_{\CM}\ge1$ and $|h_3\|_{\CM}\ge 1$, \eqref{eq:4point} is immediate.
It remains to treat the cases $\|h_3\|_{\CM}\leqslant 1 \leqslant \|h_2\|_{\CM}$ (the case of both inequalities reversed follows by symmetry) and $\|h_3\|_{\CM}\le 1$, $\|h_2\|_{\CM} \leqslant 1$.

First, consider the case $\|h_3\|_{\CM}\leqslant 1 \leqslant \|h_2\|_{\CM}$.
Using \eqref{eq:2point} for the functions $f$ and $T_{h_2}f$, we get
\begin{align*}
    \Big\|\int_H &f(x)-f(x+h_2)-f(x+h_3)+f(x+h_3+h_2) \mu(dx)\Big\|_{H} \\ &=\Big\|\int_H f(x)-f(x+h_3)-(T_{h_2}f(x)-T_{h_2}f(x+h_3)) \mu(dx) \Big\|_{H}\\
    &\leqslant C(\|f\|_\infty + \|T_{h_2}f\|_\infty)\|h_3\|_{\CM}\le C \|f\|_\infty \|h_2\|_{\CM}\|h_3\|_{\CM},
\end{align*}
with a universal constant $C>0$. This yields \eqref{eq:4point}.

Next, consider the case $\|h_2\|_{\CM}\le1$, $\|h_3\|_{\CM} \leqslant 1$. 
We have
\begin{align}\label{bigstuff}
&\Big\|\int_H f(x)-f(x+h_2)-f(x+h_3)+f(x+h_2+h_3) \mu(dx)\Big\|_{H}\nn\\
&\qquad=\Big\|\int_H f(x)\mu(dx) -\int_H f(x)T_{h_2}^\#\mu(dx) -\int_H f(x)T_{h_3}^\#\mu(dx) +\int_H f(x)T_{h_2+h_3}^\#\mu(dx) \Big\|_{H}\nn\\
&\qquad= \Big\|\int_H f(x)(1-e^{\zeta_{h_2}(x)}-e^{\zeta_{h_3}(x)}+e^{\zeta_{h_2+h_3}(x)})\mu(dx)\Big\|_{H}\nn\\
&\qquad\leqslant \Big\|\int_H f(x)(e^{\zeta_{h_2}(x)}-1)(e^{\zeta_{h_3}(x)}-1)\mu(dx)\Big\|_{H}\nn\\
&\qquad{\phantom{\le}} +\Big\|\int_H f(x)\Big(e^{\zeta_{h_2+h_3}(x)}-e^{\zeta_{h_2}(x)}e^{\zeta_{h_3}(x)}\Big) \mu(dx)\Big\|_{H}=:I_1+I_2.
\end{align}
The first summand $I_1$ is treated similarly to the proof of \eqref{eq:2point}. Using \eqref{eq:very-triv} and H\"older's inequality, we get 
\begin{align*}
I_1&\le \|f\|_\infty\Big\|\int_H  (1\vee e^{\zeta_{h_2}(x)}) |\zeta_{h_2}(x)|(1\vee e^{\zeta_{h_3}(x)}) |\zeta_{h_3}(x)| \mu(dx)\Big\|_{H}\\
&\le\|f\|_\infty \prod_{i=2,3} \|1\vee e^{\zeta_{h_i}(\cdot)}\|_{L^4(H,\mu)} \|\zeta_{h_i}(\cdot)\|_{L^4(H,\mu)}.
\end{align*}
Similarly to \eqref{eq:constant1} and \eqref{eq:constant2}, we derive for $i=2,3$
\begin{equation*}
\|1\vee e^{\zeta_{h_i}(\cdot)}\|_{L^4(H,\mu)}^4\le C;\qquad
\|\zeta_{h_i}(\cdot)\|_{L^4(H,\mu)}^4\le C \|h_i\|_{\CM}^4   
\end{equation*}
for a universal constant $C>0$. We can conclude
\begin{equation}\label{ivangoal}
    I_1\leqslant C\|f\|_\infty\|h_2\|_{\CM}\|h_3\|_{\CM}. 
\end{equation}
As for $I_2$, we have $e^{\zeta_{h_2+h_3}(x)}-e^{\zeta_{h_2}(x)}e^{\zeta_{h_3}(x)}=e^{\zeta_{h_2+h_3}(x)}(1-e^{\langle h_2,h_3\rangle_{\CM}})$. Therefore
\begin{align*}
I_2&=|1-e^{\langle h_2,h_3\rangle_{\CM}}|\,\Big\|\int_H f(x)T_{h_2+h_3}^\# \mu(dx)\Big\|_{H}\\
&\le  e\|f\|_\infty|\langle h_2,h_3\rangle_{\CM}|\leqslant e \|f\|_\infty \|h_2\|_{\CM} \|h_3\|_{\CM}.
\end{align*}
Combining this with \eqref{ivangoal} and \eqref{bigstuff}, we get 
\eqref{eq:4point}.
\end{proof}

Our next goal is to prove an analogue of \cref{lem:averagingproperties}, but for $f$ H\"older continuous. This is done in \cref{lem:interpolation} using the Lasry-Lions regularization method, proposed in \cite[Remark~(iv)]{LasryLions}. For the sake of completeness we also provide a proof thereof.
\begin{lemma} \label{lem:interpolationShapo}
Let $(E,\|\cdot\|_E)$ be a Banach space, $\lambda>0$, $\alpha\in(0,1)$, and $g \in \cC^\alpha(E,\mathbb{R})$. Define
\begin{align}\label{eq:magic}
    g_\lambda(x)\coloneqq \inf_{t \in E}\Big[g(x+t)+\lambda^{-1}\|t\|_E\Big].
\end{align}
Then for any $x,y\in E$, one has 
\begin{align}
|g_\lambda(x)-g_\lambda(y)|&\leqslant \lambda^{-1} \|x-y\|_E,\label{eq:flambdaLip}\\
|g(x)-g_\lambda(x)|&\leqslant [g]_{\cC^\alpha}^{\frac{1}{1-\alpha}} \lambda^{\frac{\alpha}{1-\alpha}}.\label{eq:flambdaf}
\end{align}
\end{lemma}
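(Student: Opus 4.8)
The plan is to treat the two estimates separately, the first being a soft property of the infimal convolution \eqref{eq:magic} and the second a one-dimensional optimization exploiting H\"older continuity.

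For the Lipschitz bound \eqref{eq:flambdaLip}, I would use the translation structure of the infimum. Fix $x,y\in E$ and let $s\in E$ be arbitrary. Substituting $t=s+(y-x)$ in the definition of $g_\lambda(x)$ gives $x+t=y+s$, so by the triangle inequality
\begin{equation*}
g_\lambda(x)\le g(y+s)+\lambda^{-1}\|s+(y-x)\|_E\le g(y+s)+\lambda^{-1}\|s\|_E+\lambda^{-1}\|x-y\|_E.
\end{equation*}
Taking the infimum over $s\in E$ yields $g_\lambda(x)\le g_\lambda(y)+\lambda^{-1}\|x-y\|_E$, and exchanging the roles of $x$ and $y$ gives \eqref{eq:flambdaLip}.

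For the approximation bound \eqref{eq:flambdaf}, I would first observe that choosing $t=0$ in \eqref{eq:magic} gives $g_\lambda(x)\le g(x)$, so it remains only to bound $g(x)-g_\lambda(x)$ from above. Using $g(x+t)\ge g(x)-[g]_{\cC^\alpha}\|t\|_E^\alpha$, the problem reduces to the scalar minimization
\begin{equation*}
g(x)-g_\lambda(x)\le -\inf_{r\ge0}\big(\lambda^{-1}r-[g]_{\cC^\alpha}r^\alpha\big),\qquad r=\|t\|_E.
\end{equation*}
Differentiating in $r$ locates the minimizer $r^*=\big([g]_{\cC^\alpha}\alpha\lambda\big)^{1/(1-\alpha)}$, and substituting back, after collecting the powers of $[g]_{\cC^\alpha}$ via the identity $1+\tfrac{\alpha}{1-\alpha}=\tfrac{1}{1-\alpha}$, produces
\begin{equation*}
g(x)-g_\lambda(x)\le (1-\alpha)\,\alpha^{\alpha/(1-\alpha)}\,[g]_{\cC^\alpha}^{1/(1-\alpha)}\,\lambda^{\alpha/(1-\alpha)}.
\end{equation*}

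The only point requiring a little care is checking that the numerical prefactor $(1-\alpha)\,\alpha^{\alpha/(1-\alpha)}$ does not exceed $1$; since $\alpha\in(0,1)$ we have both $1-\alpha<1$ and $\alpha^{\alpha/(1-\alpha)}<1$, so their product is at most $1$ and the clean bound \eqref{eq:flambdaf} follows together with $g_\lambda\le g$. I do not anticipate any genuine obstacle here: both estimates are elementary, and the essential content is the correct bookkeeping of exponents in the scalar minimization.
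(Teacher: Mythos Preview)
Your proof is correct and follows essentially the same approach as the paper: the Lipschitz bound via the translation/triangle-inequality trick on the infimal convolution, and the approximation bound via the H\"older inequality followed by the one-variable optimization in $r=\|t\|_E$, yielding the same constant $(1-\alpha)\alpha^{\alpha/(1-\alpha)}\le1$. If anything, your version of \eqref{eq:flambdaLip} is slightly cleaner, since you take the infimum over $s$ directly and do not need the infimum in \eqref{eq:magic} to be attained, whereas the paper invokes a minimizer $t^*_x$.
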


\begin{proof}
We start with \eqref{eq:flambdaLip}. Fix $\lambda>0$. Note that for any $x\in E$, by continuity, the infimum in \eqref{eq:magic} is attained at some point $t^*_x\in E$.
Note also that \eqref{eq:flambdaLip} is symmetric in $x$ and $y$ so it is no loss of generality to assume $g_\lambda(x)\geq g_\lambda(y)$ and the absolute value on the left-hand side can be dropped.
Then we have that
\begin{align*}
    g_\lambda(x)&\leqslant g(x+y-x+t^*_y)+\lambda^{-1}\|y-x+t^*_y\|_E\\
    &\leqslant g(y+t^*_y)+\lambda^{-1}\|t^*_y\|_E+\lambda^{-1}\|x-y\|_E\\
    &=g_\lambda(y) +\lambda^{-1} \|x-y\|_E,
\end{align*}
as claimed.

Moving on to \eqref{eq:flambdaf}, since $g(x)\geqslant g_\lambda(x)$ by definition, the absolute value on the left-hand side can again be dropped.
Note that
\begin{equation*}
    g_\lambda(x)= g(x+t^*_x)+\lambda^{-1}\|t^*_x\|
    \geqslant g(x)+\lambda^{-1}\|t^*_x\|-[g]_{\cC^\alpha}\|t^*_x\|^\alpha.
\end{equation*}
Therefore, for any $x\in E$ we have
\begin{equation*}
g(x)-g_\lambda(x)\leqslant \sup_{T\geq 0}\big([g]_{\cC^\alpha}T^\alpha-\lambda^{-1} T\big).
\end{equation*}
An elementary computation gives that the supremum is attained at $T=\big(\alpha [g]_{\cC^\alpha}\lambda\big)^{\frac{1}{1-\alpha}}$. Hence
\begin{equation*}
g(x)-g_\lambda(x)\le (1-\alpha)\,\alpha^{\frac{\alpha}{1-\alpha}} [g]_{\mathcal{C}^\alpha}^{\frac{1}{1-\alpha}}\lambda^{\frac{\alpha}{1-\alpha}},
\end{equation*}
which gives the desired bound \eqref{eq:flambdaf}.
\end{proof}

Now we are ready to extend \cref{lem:averagingproperties} to H\"older continuous functions. 
\begin{lemma} \label{lem:interpolation}
There exists $C>0$ such that for any $\alpha \in (0,1]$, $f\in \cC^\alpha(H,H)$ and any $h_1,h_2,h_3 \in \mathcal{H}_{\mu}$, one has 
\begin{align}
&\Big\|\int_H (f(x+h_1)-f(x+h_2))\, \mu(dx)\Big\|_H\le  C[f]_{\C^\alpha} \|h_1-h_2\|_H^\alpha \|h_1-h_2\|_{\CM}^{1-\alpha},\label{eq:twopointHolder}\\
&\Big\|\int_H (f(x+h_1)-f(x+ h_2)-f(x+h_3) +f(x+h_2 +h_3-h_1))\, \mu(dx)\Big\|_H\nonumber\\
 &\qquad\le C[f]_\alpha\|h_1-h_2\|_{\CM} (\|h_1-h_3\|_{\CM}^{1-\alpha}\wedge1)\|h_1-h_3\|_H^\alpha.\label{eq:fourpointHolder}%\\
%&\Big\|\int_H (f(x+h_1)-f(x+ h_2)-f(x+h_3) +f(x+h_2 +h_3-h_1))\, \mu(dx)\Big\|_H\nonumber\\
 %&\qquad\le C[f]_\alpha\|h_1-h_2\|_{\CM} \|h_1-h_3\|_H^\alpha.\label{eq:modifiedfour}
\end{align}
\end{lemma}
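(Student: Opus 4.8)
The plan is to deduce both estimates from their bounded-function counterparts in Lemma~\ref{lem:averagingproperties} by inserting the Lasry--Lions interpolation of Lemma~\ref{lem:interpolationShapo}. Two preliminary reductions make this possible. First, since $\big\|\int_H\cdots\,\mu(dx)\big\|_H=\sup_{\|v\|_H\le1}\big\langle v,\int_H\cdots\,\mu(dx)\big\rangle_H$ and $\langle v,\cdot\rangle_H$ commutes with the Bochner integral, it suffices to bound $\int_H\langle v,\cdots\rangle_H\,\mu(dx)$ for a fixed unit vector $v$, with a bound uniform in $v$. For such $v$ the scalar function $g:=\langle v,f\rangle_H\in\cC^\alpha(H,\R)$ satisfies $[g]_{\cC^\alpha}\le[f]_{\cC^\alpha}$, so the problem is reduced to scalar $\cC^\alpha$ functions. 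Second, by embedding $\R\hookrightarrow H$ (multiplying a scalar function by a fixed unit vector, which preserves the supremum norm), the scalar analogues of \eqref{eq:2point} and \eqref{eq:4point} follow directly from Lemma~\ref{lem:averagingproperties}. The case $\alpha=1$ is immediate from the Lipschitz bound together with these two estimates, so I would assume $\alpha\in(0,1)$ from now on.

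For \eqref{eq:twopointHolder} I would fix $\lambda>0$ and split $g=g_\lambda+(g-g_\lambda)$, where $g_\lambda$ is the regularization from Lemma~\ref{lem:interpolationShapo}. By \eqref{eq:flambdaLip} the function $g_\lambda$ is $\lambda^{-1}$-Lipschitz on $(H,\|\cdot\|_H)$, so its contribution is bounded pointwise, hence after integration, by $\lambda^{-1}\|h_1-h_2\|_H$. For the remainder $g-g_\lambda$, which by \eqref{eq:flambdaf} satisfies $\|g-g_\lambda\|_\infty\le[f]_{\cC^\alpha}^{1/(1-\alpha)}\lambda^{\alpha/(1-\alpha)}$, I would apply the scalar version of \eqref{eq:2point}, obtaining a bound $C\|g-g_\lambda\|_\infty\|h_1-h_2\|_{\CM}$. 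This yields
\begin{equation*}
\Big|\int_H(g(x+h_1)-g(x+h_2))\,\mu(dx)\Big|\le \lambda^{-1}\|h_1-h_2\|_H+C[f]_{\cC^\alpha}^{\frac1{1-\alpha}}\lambda^{\frac{\alpha}{1-\alpha}}\|h_1-h_2\|_{\CM}.
\end{equation*}
Choosing $\lambda$ to balance the two terms (explicitly $\lambda=\|h_1-h_2\|_H^{1-\alpha}\,\|h_1-h_2\|_{\CM}^{-(1-\alpha)}\,[f]_{\cC^\alpha}^{-1}$) makes both equal to a constant multiple of $[f]_{\cC^\alpha}\|h_1-h_2\|_H^\alpha\|h_1-h_2\|_{\CM}^{1-\alpha}$, giving \eqref{eq:twopointHolder} after taking the supremum over $v$; degenerate cases ($h_1=h_2$ or $[f]_{\cC^\alpha}=0$) are trivial.

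For \eqref{eq:fourpointHolder} I would use the same splitting $g=g_\lambda+(g-g_\lambda)$. The bounded part is handled by the scalar version of \eqref{eq:4point}, contributing $C\|g-g_\lambda\|_\infty\|h_1-h_2\|_{\CM}\|h_1-h_3\|_{\CM}$. The delicate part is the Lipschitz piece: a direct Lipschitz estimate on the mixed second difference recovers only one of the two increments. The key observation is that, writing $\Delta_1:=h_2-h_1$ so that $h_2+h_3-h_1=h_3+\Delta_1$, the integrand equals $\phi(x)-\phi(x+\Delta_1)$ with $\phi(x):=g_\lambda(x+h_1)-g_\lambda(x+h_3)$. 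Since $g_\lambda$ is $\lambda^{-1}$-Lipschitz, $\phi$ is \emph{bounded}, with $\|\phi\|_\infty\le\lambda^{-1}\|h_1-h_3\|_H$, so the scalar version of \eqref{eq:2point} applied to $\phi$ bounds the Lipschitz piece by $C\lambda^{-1}\|h_1-h_3\|_H\|h_1-h_2\|_{\CM}$. Factoring out $\|h_1-h_2\|_{\CM}$ leaves exactly the two-term expression optimized above, with $\|h_1-h_3\|_H,\|h_1-h_3\|_{\CM}$ in place of $\|h_1-h_2\|_H,\|h_1-h_2\|_{\CM}$; the same choice of $\lambda$ then produces \eqref{eq:fourpointHolder}.

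The main obstacle is precisely this last point: extracting the genuine product structure $\|h_1-h_2\|_{\CM}\cdot(\cdots)$ for the Lipschitz part of the four-point estimate. The four-point bound of Lemma~\ref{lem:averagingproperties} is unavailable for $g_\lambda$ (it requires $\|g_\lambda\|_\infty$, which is not controlled), and a crude Lipschitz bound loses a factor. Recasting the mixed difference as a single two-point difference of the bounded auxiliary function $\phi$ is what reconciles the Lipschitz control of $g_\lambda$ with the Cameron--Martin averaging; I expect the only remaining points requiring care to be the measurability and boundedness of $\phi$ and the uniformity of all constants in the test vector $v$.
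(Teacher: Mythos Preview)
Your proposal is correct and follows essentially the same approach as the paper. The paper likewise reduces to scalar functions $g^u=\langle u,f\rangle_H$, splits via the Lasry--Lions regularization $g^u_\lambda$, and for \eqref{eq:fourpointHolder} applies the two-point bound \eqref{eq:2point} to the bounded function $g^u_\lambda-g^{u,h_3}_\lambda$ (which equals your $\phi$, since the Lasry--Lions regularization of $g^u(\cdot+h_3)$ is $g^u_\lambda(\cdot+h_3)$) and the four-point bound \eqref{eq:4point} to $g^u-g^u_\lambda$, then optimizes in $\lambda$ exactly as you do.
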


\begin{proof}
We assume $h_1-h_2\neq 0$ and $[f]_{\C^\alpha}\neq 0$ to avoid the trivial cases. As in \cref{lem:averagingproperties}, without loss of generality, we assume that $h_2=0$.
For $u \in H$, let $g^u(x)\coloneqq \langle u ,f(x)\rangle_H$, $x\in H$, and for $\lambda>0$ define $g_\lambda^u(x)$ as in \cref{lem:interpolationShapo}. 

First, let us prove \eqref{eq:twopointHolder}. We derive
\begin{align}\label{igensh}
\Big\|\int_H (f(x+{h_1})-f(x)) \mu(dx)\Big\|_H&=\sup_{u\in H:\, \|u\|_H\leqslant 1} \Big\langle u,\int_H (f(x+{h_1})-f(x))\, \mu(dx)\Big\rangle_H\nn\\
    &=\sup_{u\in H:\, \|u\|_H\leqslant 1} \int_H (g^u(x+{h_1})-g^u(x))\, \mu(dx)\nn\\
    &\leqslant \sup_{u\in H:\, \|u\|_H\leqslant 1} \int_H (g_\lambda^u(x+{h_1})-g_\lambda^u(x))\, \mu(dx)\nn\\
    &\phantom{\le}+\sup_{u\in H:\, \|u\|_H\leqslant 1} \int_H ((g^u-g^u_\lambda)(x+{h_1})-(g^u-g^u_\lambda)(x))\, \mu(dx)\nn\\
    &=:I_1+I_2.
\end{align}
It follows from \eqref{eq:flambdaLip}, that
\begin{equation}\label{ionesh}
I_1\le \lambda^{-1}\|h_1\|_H.
\end{equation}
Applying \eqref{eq:2point}, we get
\begin{equation}\label{itwosh}
I_2\le C \|{h_1}\|_{\CM}\sup_{u\in H:\, \|u\|_H\leqslant 1} \|g^u-g^u_\lambda\|_{\infty}\le  C \|{h_1}\|_{\CM}\lambda^{\frac{\alpha}{1-\alpha}} \sup_{u\in H:\, \|u\|_H\leqslant 1}[g^u]_{\cC^\alpha}^{\frac{1}{1-\alpha}} ,
\end{equation}
with a universal constant $C>0$, where in the second inequality we used again \cref{lem:interpolationShapo}.
Using   the definition of $g^u$, we immediately get  
\begin{equation}\label{gfh}
[g^u]_{\cC^\alpha}\leqslant [f]_{\cC^\alpha},\text{\quad for any $\|u\|_H\leqslant 1$}.   
\end{equation}
Hence, substituting this into \eqref{itwosh} and combining with \eqref{igensh}, \eqref{ionesh}, we finally get
\begin{equation*}
\Big\|\int_H (f(x+{h_1})-f(x)) \mu(dx)\Big\|_H\le 
 \lambda^{-1}\|h_1\|_H+C \lambda^{\frac{\alpha}{1-\alpha}}\|{h_1}\|_{\CM} [f]_{\cC^\alpha}^{\frac{1}{1-\alpha}} 
\end{equation*}
with a universal constant $C>0$. By choosing now $\lambda=\Big(\frac{\|{h_1}\|_H}{\|{h_1}\|_{\CM}}\Big)^{1-\alpha}[f]_{\cC^\alpha}^{-1}$, \eqref{eq:twopointHolder}  follows.

The proof of \eqref{eq:fourpointHolder} is very similar to the proof of \eqref{eq:twopointHolder}. As usual, without loss of generality assume  $h_1=0$, ${h_3\neq 0}$, $[f]_{C^\alpha}\neq0$. Define $g^{u,h}_\lambda$ analogously to $g^u_\lambda$ in the first part of the proof, but starting from $T_hf(x)=f(x+h)$ instead of $f$. Arguing as in \eqref{igensh}, we write
\begin{align*}
&\Big\|\int_H f(x)-f(x+h_2)-f(x+h_3)+f(x+h_2+h_3)\,\mu(dx)\Big\|_H\\
&\quad=\sup_{\|u\|_H\leqslant 1} \int_H (g^u(x)-g^u(x+h_2)-g^u(x+h_3)+g^u(x+h_2+h_3)\, \mu(dx)\\
&\quad\leqslant \sup_{\|u\|_H\leqslant 1} \int_H (g_\lambda^u-g_\lambda^{u,h_3})(x)-(g_\lambda^u-g_\lambda^{u,h_3})(x+h_2)\,\mu(dx)\\
&\quad\phantom{\le} +\sup_{\|u\|_H\leqslant 1} \int_H \bigl((g^u-g^u_\lambda)(x)-(g^u-g^u_\lambda)(x+h_2)
\\
&\qquad\qquad\qquad-(g^u-g^u_\lambda)(x+h_3)-(g^u-g^u_\lambda)(x+h_2+h_3)\bigr)\, \mu(dx)\\
&\quad \le C\sup_{\|u\|_H\leqslant 1}\Big(\|h_2\|_{\CM} \|g^u_\lambda-g^{u,h_3}_\lambda\|_\infty + \|h_2\|_{\CM} (\|h_3\|_{\CM}\wedge1) \|g^u-g^u_\lambda\|_\infty\Big)\\
&\quad \le C \sup_{\|u\|_H\leqslant 1}\Big(\lambda^{-1}\|h_2\|_{\CM} \|h_3\|_H+\|h_2\|_{\CM} (\|h_3\|_{\CM}\wedge1)[g^u]_{\cC^\alpha}^{\frac{1}{1-\alpha}}\lambda^{\frac{\alpha}{1-\alpha}}\Big)\\
&\quad \le C\|h_2\|_{\CM} \Big(\|h_3\|_H\lambda^{-1}+ (\|h_3\|_{\CM} \wedge1)[f]_{\cC^\alpha}^{\frac{1}{1-\alpha}}\lambda^{\frac{\alpha}{1-\alpha}}\Big),
\end{align*}
with a universal constant $C>0$. Here we used both parts of \cref{lem:averagingproperties} in the second inequality, \cref{lem:interpolationShapo} in the third inequality and \eqref{gfh} in the last inequality. 
By choosing $\lambda=\Big(\frac{\|{h_3}\|_H}{\|{h_3}\|_{\CM}\wedge1}\Big)^{1-\alpha}[f]_{\cC^\alpha}^{-1}$, inequality \eqref{eq:fourpointHolder} follows.
%
%Finally, the proof of \eqref{eq:modifiedfour} is even similar to that of \eqref{eq:4point}. We assume without loss of generality $h_1=0$ as before and recall the notation $\zeta_h(x):=\langle h^*,x\rangle_H-\frac{1}{2}\|h\|_{\CM}^2$. 
%Note that the case $\|h_2\|_{\CM}\geqslant 1$ is trivial. In the case $\|h_2\|_{\CM}\leqslant 1$ we use \eqref{eq:R-N}, \eqref{eq:very-triv}, \eqref{eq:constant1}, and \eqref{eq:constant2} to get
%\begin{equs}
%    \Big\|&\int_H f(x)-f(x+h_2)-f(x+h_3)+f(x+h_2+h_3)\,\mu(dx)\Big\|_H
%    \\
%    &=\Big\|\int_H \big(f(x)-f(x+h_3)\big)\big(1-e^{\zeta_{h_2}(x)}\big)\,\mu(dx)\Big\|_H
%    \\
%    &\leqslant [f]_{\cC^\alpha}\|h_3\|_H^\alpha\|1\vee e^{\zeta_{h_2}(\cdot)}\|_{L^2(H,\mu)} \|\zeta_{h_2}(\cdot)\|_{L^2(H,\mu)}
%    \\
%    &\leqslant C [f]_{\cC^\alpha}\|h_3\|_H^\alpha\|h_2\|_{\CM}
%\end{equs}
%with a universal constant $C>0$, which finishes the proof.
\end{proof}

Next, let us understand how the operators $A$,  $e^{t A}$ and the norms $\|\cdot\|_{H}$, $\|\cdot\|_{H^\rho}$, $\|\cdot\|_{\CM}$ interact with each other. The first result is well-known; we provide a short proof for the sake of completeness.
\begin{proposition}\label{p:basic}
Let $\rho>0$. Then there exists a constant $C=C(\rho)>0$ such that for any $t>0$, $x\in H$ we have
\begin{equation}\label{mainrhoh}
\|e^{t A} x\|_{H^\rho}\le C t^{-\frac\rho2}\|x\|_H.
\end{equation}
\end{proposition}
\begin{proof}
Let $x=\sum_{k=1}^\infty x_k e_k\in H$. Then it follows by definition that
\begin{equation*}
\|e^{t A} x\|^2_{H^\rho}=\sum_{k=1}^\infty e^{-2 t \lambda_k}\lambda_k^\rho x_k^2= t^{-\rho}\sum_{k=1}^\infty e^{-2 t \lambda_k}(t\lambda_k)^\rho x_k^2\le t^{-\rho}\|x\|^2_H \sup_{z\ge0} e^{-2z}z^\rho\le C t^{-\rho}\|x\|^2_H,
\end{equation*}
which implies \eqref{mainrhoh}.
\end{proof}

The second result shows how that if the covariance operator of a Gaussian measure satisfies certain assumptions, then one can switch from the norm  $\|\cdot\|_{\CM}$ to the norm $\|\cdot\|_{H^\rho}$ for a certain ``price''. From now on, we additionally impose in this subsection that   the operator $A$ also satisfies \cref{ass:lambdagamma}.

Let $\mu_t:=\law(Z_t)$, where $Z$ is the stochastic convolution introduced in \eqref{eq:Z} and $t>0$.  Denote by $\H_t$ the Cameron-Martin space of $\mu_t$. 
\begin{lemma}\label{l:cmbound}
Let $\rho\in[0,\gamma]$.
Then there exists a constant $C=C(\gamma)>0$ such that for any $t>0$, $x\in H$ we have
\begin{equation}\label{maincm}
\|e^{t A} x\|_{\H_{t}}\le C t^{-\frac12(1+\gamma-\rho)}\|x\|_{H^\rho}.
\end{equation}
\end{lemma}

\begin{proof}
It follows from \eqref{eq:covariance-Z}, that for any $i,j\in\N$ we have
\begin{equation*}
C_{\mu_t}(e_i,e_j)=\int_H x_i x_j \mu_t (dx)=\frac12\lambda_i^{-1-\gamma}(1-e^{-2t \lambda_i})\I_{i=j}.
\end{equation*}
Take now any $x=\sum_{k=1}^\infty x_k e_k\in H$. 
Then, recalling \eqref{hmunorm}, we derive
\begin{align*}
\|e^{t A} x\|^2_{\H_t}&=2\sum_{k=1}^\infty e^{-2 t \lambda_k}\lambda_k^{1+\gamma}(1-e^{-2t \lambda_k})^{-1} x_k^2\\
&= 2 t^{-(1+\gamma-\rho)}\sum_{k=1}^\infty e^{-2 t \lambda_k}(t \lambda_k)^{1+\gamma-\rho} (1-e^{-2t \lambda_k})^{-1} \lambda_k^{\rho} x_k^2\\
&\le 2  t^{-(1+\gamma-\rho)}\|x\|_{H^\rho}^2 \sup_{x\ge0} e^{-2 x}x^{1+\gamma-\rho} (1-e^{-2x })^{-1} \\
&\le C t^{-(1+\gamma-\rho)}\|x\|_{H^\rho}^2,
\end{align*}
for $C=C(\gamma)$. Here in the last line we took into account that $1+\gamma-\rho\ge1$ by assumption, and hence the supremum is finite. This yields \eqref{maincm}.
\end{proof}

Combining all the statements of this subsection, we obtain the following technical result, which will be crucial for verifying the conditions of the stochastic sewing lemma.

\begin{corollary}\label{cor:Holderreg}
Let $\alpha \in (0,1]$, $\gamma \geqslant 0$ and $\rho \in [0,\gamma]$. Then there exists a constant $C=C(\gamma,\rho)>0$ such that for any $f \in \cC^\alpha(H,H)$, $h_1,h_2,h_3\in H$ and $t \in (0,T]$,
    \begin{align} 
      &\Big\|\int_H f(x+e^{t\Op}h_1)-f(x+e^{t\Op}h_2)\mu_t(dx)\Big\|_{H} \leqslant C t^{\frac{-(1+\gamma-\rho)(1-\alpha)}{2}} [f]_{\mathcal{C}^\alpha}\|h_1-h_2\|_{H}^\alpha\|h_1-h_2\|_{H^\rho}^{1-\alpha};\label{eq:Holderregtwopoint}\\
        &\Big\|\int_H f(x+e^{t\Op}h_1)-f(x+e^{t\Op} h_2)-f(e^{t\Op} h_3) +f(x+e^{t\Op} h_2 +e^{t\Op}h_3-e^{t\Op}h_1) \mu_t(dx)\Big\|_H\nonumber\\
        &\qquad\qquad\leqslant C t^{-\frac12(1+\gamma)(2-\alpha)+\frac\rho2}[f]_{\mathcal{C}^\alpha}\|h_1-h_3\|_{H} \|h_1-h_2\|_{H^\rho}.      \label{eq:fourpoint}\\
        &\Big\|\int_H f(x+e^{t\Op}h_1)-f(x+e^{t\Op} h_2)-f(e^{t\Op} h_3) +f(x+e^{t\Op} h_2 +e^{t\Op}h_3-e^{t\Op}h_1) \mu_t(dx)\Big\|_H\nonumber\\
        &\qquad\qquad\leqslant C t^{-\frac12(1+\gamma)+\frac\rho2}[f]_{\mathcal{C}^\alpha}\|h_1-h_3\|^\alpha_{H} \|h_1-h_2\|_{H^\rho}.      \label{eq:modfourpoint}
    \end{align}
\end{corollary}

\begin{proof}

It follows from \eqref{eq:twopointHolder} of \cref{lem:interpolation} and \cref{l:cmbound} that
\begin{align*}
    \Big\|\int_H f(x+e^{t\Op}h_1)-f(x+e^{t\Op}h_2)\mu_t(dx)\Big\|_H&\le C [f]_{\C^\alpha}\|e^{t\Op}(h_1-h_2)\|_{H}^\alpha\|e^{t\Op}(h_1-h_2)\|_{\mathcal{H}_t}^{1-\alpha}\\
    &\le C t^{\frac{-(1+\gamma-\rho)(1-\alpha)}{2}} [f]_{\mathcal{C}^\alpha}\|h_1-h_2\|_{H}^\alpha\|h_1-h_2\|_{H^\rho}^{1-\alpha}.
\end{align*}
for $C=C(\gamma)>0$. This implies \eqref{eq:Holderregtwopoint}. 

Similarly, using \eqref{eq:fourpointHolder}, we get
\begin{align}\label{fourpc}
&\Big\|\int_H (f(x+h_1)-f(x+ h_2)-f(x+h_3) +f(x+h_2 +h_3-h_1))\, \mu_t(dx)\Big\|_H\nonumber\\
&\qquad\le C[f]_\alpha\|e^{tA}(h_1-h_2)\|_{\H_t} (\|e^{tA}(h_1-h_3)\|_{\H_t}^{1-\alpha}\wedge1)\|e^{tA}(h_1-h_3)\|_H^\alpha
\end{align}
Using \cref{p:basic,l:cmbound} we deduce 
\begin{align}
&\|e^{tA}(h_1-h_2)\|_{\H_t}\le C  t^{-\frac12(1+\gamma-\rho)}\|h_1-h_2\|_{H^\rho};\label{goodin1}\\
&\|e^{tA}(h_1-h_3)\|_{H}^\alpha\le C\|h_1-h_3\|_{H}^\alpha;\label{goodin2}\\
%\end{align}
%Finally, we also have
%\begin{align*}
&\|e^{tA}(h_1-h_3)\|_{\H_t}^{1-\alpha}%&=\|e^{\frac12tA}e^{\frac12tA}(h_1-h_3)\|_{\CM}^{1-\alpha}\le t^{-\frac12(1+\gamma-\rho)(1-\alpha)}\|e^{
%	\frac12tA}(h_1-h_3)\|_{H^\rho}^{1-\alpha}\\
\le t^{-\frac12(1+\gamma)(1-\alpha)}\|h_1-h_3\|^{1-\alpha}_{H}\nn
\end{align}
for $C=C(\gamma)>0$. Combining these inequalities with \eqref{fourpc},
we get~\eqref{eq:fourpoint}. Similarly, using the bound $\|e^{tA}(h_1-h_3)\|_{\H_t}^{1-\alpha}\wedge1\le 1$, \eqref{eq:modfourpoint} also follows from \eqref{fourpc}, \eqref{goodin1} and \eqref{goodin2}.
\end{proof}

\subsection{Integral estimates and proof the of main result}\label{subsec:mainproof}

After the results on the averaging properties of Gaussian measures in the previous subsection, we now proceed with the proof of the main result.

First, we recall our main technical tools: the stochastic sewing lemma for Hilbert space valued stochastic processes \cite[Theorem 3.1]{Le} and the taming singularities lemma \cite[Lemma~2.3]{BFG}. One feature of our proof strategy for strong uniqueness is that we do buckling \textit{simultaneously} in two spaces: $\C^\tau([0,1],H)$ and $\C^{\tau-\frac\rho2}([0,1],H^\rho)$ for certain values of $\tau,\rho>0$. Accordingly, we must capture the regularization effect of $Z$ perturbed by generic processes both in $H$ and in $H^\rho$.
This is achieved in \cref{lem:comparison,c:comparison} by applying a stochastic sewing argument in Hilbert spaces under \cref{a:manyineq}, which introduces additional parameters that allow us to fine tune between these spaces. We then apply this result to the specific perturbation given by the drift part of mild solutions to \eqref{eq:mainSDE} in order to prove existence and uniqueness. Finally, in \cref{lem:exponents}, we show that the assumptions in \cref{thm:main} coincide with \cref{a:manyineq}, and hence \cref{thm:main} follows.

For $0 \leqslant S < T$, consider the simplices
\begin{equation*}
	\Delta^2_{[S,T]}:=\{(s,t)\in[S,T]^2\colon s< t\};\quad \Delta^3_{[S,T]}:=\{(s,u,t)\in[S,T]^3\colon s< u< t\}.
\end{equation*}
For brevity, we sometimes write $\Delta_{[S,T]} := \Delta^2_{[S,T]}$.
For $A_{\cdot,\cdot}:\Delta^2_{[S,T]}\to H$, we denote
\begin{equation}\label{deltadiff}
	\delta A_{s,u,t}:= A_{s,t}-A_{s,u}-A_{u,t},\quad  (s,u,t)\in\Delta^3_{[S,T]}. 
\end{equation}
For a filtration $(\mathcal{F}_t)_{t\geqslant 0}$, we denote the conditional expectation $\EE[\cdot\vert \mathcal{F}_t]$ by $\EE^t[\cdot]$.

\begin{proposition}[Stochastic sewing in Hilbert spaces, {\cite[Theorem 3.1]{Le}}] \label{lem:SSL}
Let $H$ be a separable Hilbert space and $m \geqslant 2$. Let $A_{\cdot,\cdot}\colon \Omega \times \Delta^2_{[S,T]}\rightarrow H$ be measurable. Assume that there exist constants $\Gamma_1,\Gamma_2, \Gamma_3\geqslant 0$, $\varepsilon_1,\varepsilon_2,\eps_3>0$, such that, for all $(s,u,t)\in[S,T]^3$, $A_{s,t}$ is $\mathcal{F}_t$-measurable and the following bounds hold
\begin{align}
\big\|\|\delta A_{s,u,t}\|_H\big\|_{L^m(\Omega)}&\leqslant \Gamma_1|t-s|^{\frac12+\varepsilon_1}, \label{eq:SSL2}
\\
    \big\|\|\EE^s[\delta A_{s,u,t}]\|_H\big\|_{L^m(\Omega)}&\leqslant  \Gamma_2|t-s|^{1+\varepsilon_2}+\Gamma_3|t-s|^{1+\varepsilon_3}\label{eq:SSL1}.    
\end{align}
Then there exists a process $(\mathcal{A}_t)_{t\in [S,T]}$ such that, for any $t \in [S,T]$ 
and any sequence of partitions $\Pi_k=\{t_i^k\}_{i=0}^{N_k}$ of $[S,t]$ with mesh size going 
to zero, 
\begin{align} \label{eq:SSLconv}
    \mathcal{A}_t=\lim_{k\rightarrow \infty}\sum_{i=0}^{N_k}A_{t_i^k,t_{i+1}^k} \text{ in probability.} 
\end{align}
Furthermore, there exists a constant $C=C(\varepsilon_1,\varepsilon_2,\eps_3,m)$ independent of $S,T$ such that for every $(s,t) \in \Delta^2_{[S,T]}$,
\begin{equation}\label{sslmres}
    \big\|\|A_{s,t}-\mathcal{A}_t-\mathcal{A}_s\|_H\big\|_{L^m(\Omega)}\leqslant C\Gamma_1 (t-s)^{\frac12+\varepsilon_1}+C \Gamma_2 (t-s)^{1+\varepsilon_2}+C \Gamma_3 (t-s)^{1+\varepsilon_3}.
\end{equation}
\end{proposition}

\begin{proposition}[Taming singularities lemma, \cite{BFG,le2021taming}]\label{e:mbb2}
Let $(E,d)$ be a metric space, $T\in(0,1]$ and $h \in \mathbb{N}$. Suppose there exist constants $\tau_i,\eta_i \ge 0$ with $\tau_i>\eta_i$, $i=1,\dots,h$, and $\Gamma_i>0$, such that a function $F\colon(0,T]\to E$ satisfies
\begin{equation*}
d(F_s,F_t) \le \sum_{i=1}^h\Gamma_i s^{-\eta_i}(t-s)^{\tau_i}\quad\text{for any $0< s\le t\le T$}.
\end{equation*}
Then $F$ can be continuously extended at $0$ and there exist constants $C_i=C_i(\eta_i,\tau_i)>0$ such that
\begin{equation}\label{rezult}
d(F_s,F_t)\le \sum_{i=1}^h C_i \Gamma_i(t-s)^{\tau_i- \eta_i}\quad\text{for any $0\le s\le t\le T$.}
\end{equation}	
\end{proposition}

For a process $Y\colon \Omega\times [S,T] \rightarrow H$, $\tau \in (0,1]$, $\rho\ge0$, $m\in[1,\infty]$, we define the following H\"older-type seminorm:
\begin{align*}[Y]_{\mathcal{C}^\tau_{[S,T],H^\rho,L^m}}\coloneqq \sup_{(s,t) \in \Delta^2_{[S,T]}} \frac{\big\|\|Y_t-e^{(t-s)\Op}Y_s\|_{H^\rho}\big\|_{L^m(\Omega)}}{(t-s)^\tau}.
\end{align*}
We define the corresponding norm by
\begin{align*}
\|Y\|_{\mathcal{C}^\tau_{[S,T],H^\rho,L^m}}:=\big\|\|Y_S\|_{H^\rho}\big\|_{L^m(\Omega)}+[Y]_{\mathcal{C}^\tau_{[S,T],H^\rho,L^m}}.
\end{align*}
At the endpoint $\tau=0$ the operator $A$ no longer plays a role and we set
\begin{equ}
    \|Y\|_{\cC^0_{[S,T],H^\rho,L^m}}:=\sup_{u\in[S,T]}\big\|\|Y_u\|_{H^\rho}\big\|_{L^m(\Omega)}.
\end{equ}

Before we proceed to the derivation of the main integral estimates, we introduce additional parameters $\rho$, $\tau$ satisfying the following condition.
\begin{assumption}\label{a:manyineq}
Suppose that $\rho\in[0,\gamma]\cap [0,2)$ and  $\tau \in (\rho/2,1]$ satisfy the following inequalities:
\begin{align}
1-\frac12 (1+\gamma-\rho)(1-\alpha)>\max\Big(\frac12,\tau,\frac{\rho}{2}\Big)\label{eqA}\\
\begin{cases}
    2-\frac{(1+\gamma)(2-\alpha)}{2}+\frac\rho2>1&\text{if }\gamma\in[0,1]
\\
1+\alpha-\frac{1+\gamma}{2}+\frac\rho2>1
&\text{if }\gamma>1
\end{cases}\label{eqB}\\
1-\frac12 (1+\gamma)(1-\alpha)+\tau>1\label{eqC}.
\end{align}

\end{assumption}

Fix now a filtration $(\F_t)_{t\in[0,1]}$ such that $W$ is an $( (\F_t)_{t\in[0,1]})$-cylindrical Brownian motion.

\begin{lemma} \label{lem:comparison}
Let \cref{ass:A,ass:lambdagamma,a:manyineq} hold. Let $m \geqslant 2$, $\alpha \in (0,1]$ and $\theta\ge0$. %satisfies
% \begin{equation}\label{rhocond}
% \rho>\frac{\gamma(1-\alpha)-\alpha}{1-\alpha}, \qquad \rho>\gamma(2-\alpha)-\alpha,\qquad \rho>(1+\gamma)-2\alpha
% \end{equation}
% and let $\tau \in (\rho/2,1]$ fulfill
% \begin{equation} \label{eq:condtau}
% 1-\frac12 (1+\gamma)(1-\alpha)+\tau>1.
% \end{equation}
Then there exists  $C=C(\alpha,\gamma,\theta,\rho,\tau,m)>0$ such that for any $(S,T,T_0)\in \Delta^3_{[0,1]}$ and any $H$-valued stochastic processes $\{\phi_t\}_{t \in [0,1]}$, $\{\psi_t\}_{t \in [0,1]}$ adapted to $(\mathcal{F}_t)$
it holds that
\begin{align} \label{eq:comparison2}
&\Big\|\Big\|\int_S^T e^{(T_0-r)\Op}\Big(b(Z_r+\psi_r)-b(Z_r+\phi_r)\Big)\, dr\Big\|_{H^\theta}\Big\|_{L^m(\Omega)}\\
&\qquad\leqslant C [b]_{\cC^\alpha} (T_0-T)^{-\frac\theta2} \Big( (T-S)^{1-\frac{(1+\gamma-\rho)(1-\alpha)}{2}}\|\psi-\phi\|_{\mathcal{C}^0_{[S,T],H,L^m}}^\alpha \|\psi-\phi\|_{\mathcal{C}^0_{[S,T],H^\rho,L^m}}^{1-\alpha}\nonumber\\
&\qquad \quad +\mathbbm{1}_{\gamma\leqslant 1}(T-S)^{2-\frac{(1+\gamma)(2-\alpha)}{2}+\frac\rho2} \|\psi-\phi\|_{\mathcal{C}^0_{[S,T],H^\rho,L^m}}[\psi]_{\cC^{1}_{[S,T],H,L^\infty}}\nonumber\\
&\qquad \quad +\mathbbm{1}_{\gamma>1}(T-S)^{1+\alpha-\frac{1+\gamma}{2}+\frac\rho2} \|\psi-\phi\|_{\mathcal{C}^0_{[S,T],H^\rho,L^m}}[\psi]^\alpha_{\cC^{1}_{[S,T],H,L^\infty}}\nonumber\\
&\qquad \quad +(T-S)^{1-\frac{(1+\gamma)(1-\alpha)}{2}+\tau}[\psi-\phi]^\alpha_{\mathcal{C}^{\tau}_{[S,T],H,L^m}}[\psi-\phi]^{1-\alpha}_{\mathcal{C}^{\tau-\rho/2}_{[S,T],H^\rho,L^m}}\Big).\nonumber
\end{align}
\end{lemma}

\begin{proof}
Note that by linearity it is no loss of generality to assume $[b]_{\cC^\alpha}=1$. % We start with the proof of   \eqref{eq:comparison2}.
We aim to apply \cref{lem:SSL}. For $(s,t) \in \Delta^2_{[S,T]}$, set 
\begin{align*}
    A_{s,t}&\coloneqq \EE^s \int_s^t e^{(T_0-r)\Op} b(Z_r+e^{(r-s)\Op}\psi_{s}) - e^{(T_0-r)\Op} b(Z_r+e^{(r-s)\Op}\phi_{s})dr,\\
    \mathcal{A}_{t}&\coloneqq\int_0^t e^{(T_0-r)\Op} b(Z_r+\psi_r)-e^{(T_0-r)\Op}b(Z_r+\phi_r) dr.
\end{align*}
We claim that there exists $C=C(\alpha,\gamma,\theta,\rho,\tau)>0$ such that for any $(s,u,t) \in \Delta^3_{[S,T]}$, we have
\begin{enumerate}[label=(\alph*)]
\item \label{en:a}$$\big\|\|A_{s,t}\|_{H^\theta}\big\|_{L^m(\Omega)}\le   C(T_0-T)^{-\frac\theta2}  (t-s)^{1-\frac{(1+\gamma-\rho)(1-\alpha)}{2}}\big\|\|\psi_s-\phi_s\|_{H}\big\|_{L^m(\Omega)}^\alpha\big\|\|\psi_s-\phi_s\|_{H^\rho}\big\|_{L^m(\Omega)}^{1-\alpha};$$
\item \label{en:b} 
\begin{align*}\big\|&\|\EE^s \delta A_{s,u,t}\|_{H^\theta}\big\|_{L^m(\Omega)}\\
&\le C\mathbbm{1}_{\gamma\le1}(T_0-T)^{-\frac\theta2}  (t-s)^{2-\frac{(1+\gamma)(2-\alpha)}{2}+\frac\rho2} [\psi]_{\mathcal{C}^{1}_{[s,t],H,L^\infty}}\big\|\|\psi_s-\phi_s\|_{H^\rho}\big\|_{L^m(\Omega)}\\
&+ C\mathbbm{1}_{\gamma>1}(T_0-T)^{-\frac\theta2}  (t-s)^{1+\alpha-\frac{1}{2}(1+\gamma)+\frac{\rho}{2}}[\psi]^\alpha_{\mathcal{C}^{1}_{[s,t],H,L^\infty}}\big\|\|\psi_s-\phi_s\|_{H^\rho}\big\|_{L^m(\Omega)}\\
&+ C  (T_0-T)^{-\frac{\theta}{2}}(t-s)^{1-\frac{(1+\gamma)(1-\alpha)}{2}+\tau}[\psi-\phi]^\alpha_{\mathcal{C}^{\tau}_{[s,t],H,L^m}}[\psi-\phi]^{1-\alpha}_{\mathcal{C}^{\tau-\rho/2}_{[s,t],H^\rho,L^m}};
\end{align*}
\item \label{en:c} $\mathcal{A}$ and $A$ satisfy \eqref{eq:SSLconv}, i.e. $\mathcal{A}$ is the limit in probability of the corresponding Riemann sums.
\end{enumerate}
Note that the power of the increment in \ref{en:a} is indeed larger than $\frac12$, and those in \ref{en:b} are larger than $1$, thanks precisely to \cref{a:manyineq}.
%\begin{align*}
%    -(1+\gamma-\rho)(1-\alpha)>-1,\quad -%(1+\gamma)(2-\alpha)+\rho>-2, \quad \alpha-%\frac{1}{2}(1+\gamma)+\frac{\rho}{2}>0
%\end{align*}
%and by \eqref{eqC} the power of the last summand is also greater than 1.
Therefore \ref{en:a} verifies \eqref{eq:SSL2} with
\begin{equation*}
\Gamma_1=C(T_0-T)^{-\frac\theta2}\|\psi-\phi\|_{\mathcal{C}^0_{[S,T],H,L^m}}^\alpha \|\psi-\phi\|_{\mathcal{C}^0_{[S,T],H^\rho,L^m}}^{1-\alpha},\quad \eps_1=\frac12-\frac{(1+\gamma-\rho)(1-\alpha)}{2}
\end{equation*}
and \ref{en:b} verifies \eqref{eq:SSL1}
with 
\begin{align*}
&\Gamma_2=C(T_0-T)^{-\frac\theta2}\|\varphi-\psi\|_{\cC^0_{[S,T],H^\rho,L^m}}\Big(\mathbbm{1}_{\gamma\le1}[\psi]_{\cC^{1}_{[S,T],H,L^\infty}}+\mathbbm{1}_{\gamma>1}[\psi]_{\cC^{1}_{[S,T],H,L^\infty}}^\alpha\Big);\\
&\Gamma_3=C(T_0-T)^{-\frac\theta2}[\psi-\varphi]_{\cC^{\tau}_{[S,T],H,L^m}}^\alpha[\psi-\varphi]_{\cC^{\tau-\frac\rho2}_{[S,T],H^\rho,L^m}}^{1-\alpha};\\
&\eps_2=\mathbbm{1}_{\gamma\le1}(1-\frac{(1+\gamma)(2-\alpha)}{2}+\frac\rho2)+
 \mathbbm{1}_{\gamma>1}(\alpha-\frac{1}{2}(1+\gamma)+\frac{\rho}{2});\\
&\eps_3=-\frac12 (1+\gamma)(1-\alpha)+\tau.
\end{align*}
Thus after checking that \ref{en:a}-\ref{en:c} hold true, the claimed bound \eqref{eq:comparison} immediately follows from \cref{lem:SSL}.

We first verify \ref{en:a}. By \cref{p:basic} and \eqref{eq:Holderregtwopoint} in \cref{cor:Holderreg},
\begin{align*}
\big\|\|A_{s,t}\|_{H^\theta}\big\|_{L^m(\Omega)} &\le \int_s^t \Big\|\big\|\EE^s \big(e^{(T_0-r)\Op}b(Z_r+e^{(r-s)\Op}\psi_s)-e^{(T_0-r)\Op}b(Z_r+e^{(r-s)\Op}\phi_s)\big)\big\|_{H^\theta}\Big\|_{L^m(\Omega)} dr \\
&\le C(T_0-T)^{-\frac\theta2}\int_s^t \Big\|\big\|\EE^s \big(b(Z_r+e^{(r-s)\Op}\psi_s)-b(Z_r+e^{(r-s)\Op}\phi_s)\big)\big\|_{H}\Big\|_{L^m(\Omega)} dr \\
&\le  C(T_0-T)^{-\frac\theta2} \int_s^t (r-s)^{\frac{-(1+\gamma-\rho)(1-\alpha)}{2}} \big\|\|\psi_s-\phi_s\|_{H}^\alpha\|\psi_s-\phi_s\|_{H^\rho}^{1-\alpha}\big\|_{L^m(\Omega)}dr\\
&\le   C(T_0-T)^{-\frac\theta2}  (t-s)^{1-\frac{(1+\gamma-\rho)(1-\alpha)}{2}}\big\|\|\psi_s-\phi_s\|_{H}^\alpha\|\psi_s-\phi_s\|_{H^\rho}^{1-\alpha}\big\|_{L^m(\Omega)}\\
&\le   C(T_0-T)^{-\frac\theta2}  (t-s)^{1-\frac{(1+\gamma-\rho)(1-\alpha)}{2}}\big\|\|\psi_s-\phi_s\|_{H}\big\|_{L^m(\Omega)}^\alpha\big\|\psi_s-\phi_s\|_{H^\rho}\big\|_{L^m(\Omega)}^{1-\alpha},
\end{align*}
for $C=C(\alpha,\gamma,\theta,\rho)>0$, where we applied H\"older's inequality in the final inequality.

To prove \ref{en:b}, first note
\begin{align}\label{bstep1}
    \|\EE^s \delta A_{s,u,t}\|_{H^\theta}&=\Big\|\int_u^t \EE^s \Big[e^{(T_0-r)\Op}b(Z_r+e^{(r-s)\Op}\psi_s)-e^{(T_0-r)\Op}b(Z_r+e^{(r-s)\Op}\phi_s)\nn\\
    &\quad -e^{(T_0-r)\Op}b(Z_r+e^{(r-u)\Op}\psi_u)+e^{(T_0-r)\Op}b(Z_r+e^{(r-u)\Op}\phi_u)\Big] dr\Big\|_{H^\theta}.
\end{align}
Adding and subtracting $e^{(T_0-r) \Op } b(Z_r+e^{(r-s)\Op}\phi_s+e^{(r-u)\Op}\psi_u-e^{(r-s)\Op}\psi_s)$, we can split into a four-point error $I_1$ and a two-point error $I_2$. We can control the two-point error  by \cref{p:basic} and \eqref{eq:Holderregtwopoint} in \cref{cor:Holderreg} giving
\begin{align}\label{bstep2}
&\|I_2\|_{L^m(\Omega)}\nn\\
&\,\,\le C(T_0-T)^{-\frac\theta2}  \int_u^t (r-u)^{\frac{-(1+\gamma-\rho)(1-\alpha)}{2}} \big\|\|(\psi-\phi)_u-e^{(u-s)A}(\psi_s-\phi_s)\|_{H}\big\|_{L^m(\Omega)}^{\alpha} \nn\\
&\quad \quad \quad \quad \times \big\|\|(\psi-\phi)_u-e^{(u-s)A}(\psi_s-\phi_s)\|_{H^\rho}\big\|_{L^m(\Omega)}^{1-\alpha} dr\nn\\
&\,\,\le C(T_0-T)^{-\frac\theta2}  \int_u^t (r-u)^{\frac{-(1+\gamma-\rho)(1-\alpha)}{2}} (u-s)^{\tau-\frac{1}{2}\rho(1-\alpha)} [\psi-\phi]^\alpha_{\mathcal{C}^{\tau}_{[s,t],H,L^m}} [\psi-\phi]_{\mathcal{C}^{\tau-\rho/2}_{[s,t],H^\rho,L^m}}^{1-\alpha} dr\nn\\
&\,\,\le C  (T_0-T)^{-\frac{\theta}{2}}(t-s)^{1-\frac{(1+\gamma)(1-\alpha)}{2}+\tau}[\psi-\phi]^\alpha_{\mathcal{C}^{\tau}_{[s,t],H,L^m}}[\psi-\phi]^{1-\alpha}_{\mathcal{C}^{\tau-\rho/2}_{[s,t],H^\rho,L^m}}
\end{align}
for $C=C(\alpha,\gamma,\theta,\rho,\tau)>0$.
Here we used that, by \eqref{eqA}, the exponent $-\frac{(1+\gamma-\rho)(1-\alpha)}{2}$ is larger than $-1$, which ensures integrability in $r$.

To control $I_1$, we distinguish the cases $\gamma\leqslant 1$ and $\gamma>1$. For the former we use \eqref{eq:fourpoint} to get
\begin{align}\label{bstep3}
&\|I_1\|_{L^m(\Omega)}\nn\\
&\,\,\le C (T_0-T)^{-\frac\theta2}\int_u^t (r-u)^{\frac{-(1+\gamma)(2-\alpha)}{2}+\frac\rho2} \big\|\|\psi_u-e^{(u-s)\Op}\psi_s\|_H\big\|_{L^\infty(\Omega)} \big\|\|e^{(u-s)\Op}(\psi_s-\phi_s)\|_{H^\rho}\big\|_{L^m(\Omega)} dr\nn\\
&\,\,\le C (T_0-T)^{-\frac\theta2} \int_u^t (r-u)^{\frac{-(1+\gamma)(2-\alpha)}{2}+\frac\rho2} (u-s) [\psi]_{\mathcal{C}^{1}_{[s,t],H,L^\infty}} \big\|\|\psi_s-\phi_s\|_{H^\rho}\big\|_{L^m(\Omega)}dr\nn\\
&\,\,\le C(T_0-T)^{-\frac\theta2}  (t-s)^{2-\frac{(1+\gamma)(2-\alpha)}{2}+\frac\rho2} [\psi]_{\mathcal{C}^{1}_{[s,t],H,L^\infty}}\big\|\|\psi_s-\phi_s\|_{H^\rho}\big\|_{L^m(\Omega)}
\end{align}
for $C=C(\alpha,\gamma,\theta,\rho,\tau)>0$.
Note that the exponent $\frac{-(1+\gamma)(2-\alpha)}{2}+\frac\rho2$ is larger than $-1$ by \eqref{eqB}, and therefore the integral is finite.  
In the case $\gamma>1$, we use \eqref{eq:modfourpoint} to get 
\begin{align*}
&\|I_1\|_{L^m(\Omega)}\\
&\,\,\le C (T_0-T)^{-\frac\theta2}\int_u^t (r-u)^{-\frac{1}{2}(1+\gamma)+\frac{\rho}{2}} \big\|\|\psi_u-e^{(u-s)\Op}\psi_s\|_H^\alpha\big\|_{L^\infty(\Omega)} \big\|\|e^{(u-s)\Op}(\psi_s-\phi_s)\|_{H^\rho}\big\|_{L^m(\Omega)} dr\\
&\,\,\le C (T_0-T)^{-\frac\theta2} \int_u^t (r-u)^{-\frac{1}{2}(1+\gamma)+\frac{\rho}{2}} (u-s)^\alpha [\psi]^\alpha_{\mathcal{C}^{1}_{[s,t],H,L^\infty}} \big\|\|\psi_s-\phi_s\|_{H^\rho}\big\|_{L^m(\Omega)}dr\\
&\,\,\le C(T_0-T)^{-\frac\theta2}  (t-s)^{1+\alpha-\frac{1}{2}(1+\gamma)+\frac{\rho}{2}}[\psi]^\alpha_{\mathcal{C}^{1}_{[s,t],H,L^\infty}}\big\|\|\psi_s-\phi_s\|_{H^\rho}\big\|_{L^m(\Omega)},
\end{align*}
for $C=C(\alpha,\gamma,\theta,\rho,\tau)>0$, where again in the last inequality we used condition \eqref{eqB} to ensure that the integral converges. Combining this with \eqref{bstep2}, \eqref{bstep3} and substituting into \eqref{bstep1}, we see that claim \ref{en:b} holds. 

It remains to verify \ref{en:c}, that is, we need to show that $\mathcal{A}$ is indeed the limit of the corresponding Riemann sums. Let $t \in [S,T]$. Let $(\Pi_n)_n$ be a sequence of partitions of $[S,t]$ with mesh size going to zero. Denote $\Pi_n=\{t_i^n\}_{i=0}^{N_n}$. First note that we have
\begin{align*}
    \big\|\|\EE^{t_i^n}&[\mathcal{A}_{t_{i+1}^n}-\mathcal{A}_{t_i^n}]-A_{t_i^n,t_{i+1}^n}\|_{H^\theta}\big\|_{L^2(\Omega)}\\
    &\leqslant C (T_0-T)^{-\theta/2}\int_{t_i^n}^{t_{i+1}^n}\Big\|\Big\|e^{(t-r)\Op} b(Z_r+e^{(r-u)\Op}\psi_{u}) - e^{(t-r)\Op} b(Z_r+e^{(r-u)\Op}\phi_{u}) \\
    &\quad- \Big(e^{(t-r)\Op} b(Z_r+\psi_r)-e^{(t-r)\Op}b(Z_r+\phi_r)\Big)\Big\|_H\Big\|_{L^2(\Omega)} dr\\
    &\leqslant C (T_0-T)^{-\theta/2}\|b\|_{\cC^\alpha} (t_{i+1}^n-t_i^n)^{1+\alpha \tau}(1+[\psi]_{\cC^{\tau}_{[S,T],H,L^2}}+[\phi]_{\cC^{\tau}_{[S,T],H,L^2}})
\end{align*}
for $C=C(\theta)>0$.
Then using boundedness of $b$ as well as the above line of inequalities, we get that
\begin{align*}
\Big\|\big\|\mathcal{A}_t-\sum_{i=0}^{N_{n}-1}A_{t_i^n,t_{i+1}^n}\big\|_{H^\theta}\Big\|_{L^2(\Omega)}&\le \Big\|\big\|\sum_{i=0}^{N_{n}-1}(\A_{t_{i+1}^n}-\A_{t_i}^n-\E^{t_i^n}[\A_{t_{i+1}^n}-\A_{t_i}^n])\big\|_{H^\theta}\Big\|_{L^2(\Omega)}\\
&\phantom{\le}+\Big\|\big\|\sum_{i=0}^{N_{n}-1}(\E^{t_i^n}[\A_{t_{i+1}^n}-\A_{t_i}^n]-A_{t_i^n,t_{i+1}^n})\big\|_{H^\theta}\Big\|_{L^2(\Omega)} \\
&\le  
\Big(\sum_{i=0}^{N_{n}-1}\Big\|\big\| \mathcal{A}_{t_{i+1}^n}-\mathcal{A}_{t_i^n}- \EE^{t_i^n}[\mathcal{A}_{t_{i+1}^n}-\mathcal{A}_{t_i^n}]\big\|_{H^\theta}\Big\|^2_{L^2(\Omega)} \Big)^{1/2}\\  &\phantom{\le}+C (T_0-T)^{-\theta/2}\|b\|_{\cC^\alpha} |\Pi_n|^{\alpha \tau}(1+[\psi]_{\cC^{\tau}_{[S,T],H,L^2}}+[\phi]_{\cC^{\tau}_{[S,T],H,L^2}}) \\
&\le C \|b\|_{\C^\alpha} (T_0-T)^{-\theta/2}|\Pi_n|^{\frac12\wedge\alpha\tau}(1+[\psi]_{\cC^{\tau}_{[S,T],H,L^2}}+[\phi]_{\cC^{\tau}_{[S,T],H,L^2}})
\end{align*}
for $C=C(\theta)>0$. Since the right-hand side of the above inequality tends to $0$ as $n\to\infty$, we see that condition \ref{en:c} holds.

Thus, all of the assumptions of \cref{lem:SSL} are satisfied, and the desired bound  \eqref{eq:comparison2} follows from \eqref{sslmres}.
%The proof of \eqref{eq:comparison3} follows by the same arguments without having to move from $\|\cdot\|_{H^\rho}$ to $\|\cdot\|_H$ with the semigroup estimate Proposition~\ref{p:basic} using the term $e^{(T_0-r)A}$.
\end{proof}

Combining \cref{e:mbb2,lem:comparison}, we derive the following corollary. 

\begin{corollary} \label{c:comparison}
Let \cref{ass:A,ass:lambdagamma,a:manyineq} hold. Let $m \geqslant 2$ and $\alpha \in (0,1]$. Then there exists
 $C=C(\alpha,\gamma,\rho,\tau,m)>0$ such that for any $(S,T)\in \Delta^2_{[0,1]}$ and any $H$-valued stochastic processes $\{\phi_t\}_{t \in [0,1]}$, $\{\psi_t\}_{t \in [0,1]}$ adapted to $(\mathcal{F}_t)$, it holds that
\begin{align} \label{eq:comparison}
&\Big\|\Big\|\int_S^T e^{(T-r)\Op}\Big(b(Z_r+\psi_r)-b(Z_r+\phi_r)\Big)\, dr\Big\|_{H^\rho}\Big\|_{L^m(\Omega)}\\
&\qquad\leqslant C [b]_{\cC^\alpha}  (T-S)^{1-\frac{(1+\gamma)(1-\alpha)+\alpha \rho}{2}}\|\psi-\phi\|_{\mathcal{C}^0_{[S,T],H,L^m}}^\alpha \|\psi-\phi\|_{\mathcal{C}^0_{[S,T],H^\rho,L^m}}^{1-\alpha}\nonumber\\
&\quad +C [b]_{\cC^\alpha} \mathbbm{1}_{\gamma\leqslant 1} (T-S)^{2-\frac{(1+\gamma)(2-\alpha)}{2}} \|\psi-\phi\|_{\mathcal{C}^0_{[S,T],H^\rho,L^m}}[\psi]_{\cC^{1}_{[S,T],H,L^\infty}}\nonumber\\
&\quad + C [b]_{\cC^\alpha} \mathbbm{1}_{\gamma>1}(T-S)^{1+\alpha-\frac{1+\gamma}{2}}\|\psi-\phi\|_{\mathcal{C}^0_{[S,T],H^\rho,L^m}}[\psi]^\alpha_{\cC^{1}_{[S,T],H,L^\infty}}\nonumber\\
&\quad + C [b]_{\cC^\alpha} (T-S)^{1-\frac{(1+\gamma)(1-\alpha)}{2}+\tau-\frac{\rho}{2}}[\psi-\phi]^\alpha_{\mathcal{C}^{\tau}_{[S,T],H,L^m}}[\psi-\phi]^{1-\alpha}_{\mathcal{C}^{\tau-\rho/2}_{[S,T],H^\rho,L^m}}.\nonumber
\end{align}
% and
% \begin{align} \label{eq:comparison3}
% &\Big\|\Big\|\int_S^T e^{(T-r)\Op}\Big(b(Z_r+\psi_r)-b(Z_r+\phi_r)\Big)\, dr\Big\|_{H}\Big\|_{L^m(\Omega)}\\
% &\qquad  \leqslant C [b]_{\cC^\alpha} \Big( (T-S)^{1-\frac{(1+\gamma-\rho)(1-\alpha)}{2}}\|\psi-\phi\|_{\mathcal{C}^0_{[S,T],H,L^m}}^\alpha \|\psi-\phi\|_{\mathcal{C}^0_{[S,T],H^\rho,L^m}}^{1-\alpha}\nonumber\\
% &\qquad \quad +\mathbbm{1}_{\gamma\leqslant 1}(T-S)^{2-\frac{(1+\gamma)(2-\alpha)}{2}+\frac\rho2} \|\psi-\phi\|_{\mathcal{C}^0_{[S,T],H^\rho,L^m}}[\psi]_{\cC^{1}_{[S,T],H,L^\infty}}\nonumber\\
% &\qquad \quad +\mathbbm{1}_{\gamma>1}(T-S)^{1+\alpha-\frac{1+\gamma}{2}+\frac\rho2} \|\psi-\phi\|_{\mathcal{C}^0_{[S,T],H^\rho,L^m}}[\psi]^\alpha_{\cC^{1}_{[S,T],H,L^\infty}}\nonumber\\
% &\qquad \quad +(T-S)^{1-\frac{(1+\gamma)(1-\alpha)}{2}+\tau}[\psi-\phi]^\alpha_{\mathcal{C}^{\tau}_{[S,T],H,L^m}}[\psi-\phi]^{1-\alpha}_{\mathcal{C}^{\tau-\rho/2}_{[S,T],H^\rho,L^m}}\Big).\nonumber
% \end{align}
\end{corollary}

\begin{proof}
The statement immediately follows from \eqref{eq:comparison2} in \cref{lem:comparison} with $\theta=\rho$ by applying \cref{e:mbb2} to the space $E$ of measurable functions $\Omega\to H^\rho$ equipped with the distance $d(x,y):=\|\|x-y\|_{H^\rho}\|_{L^m(\Omega)}$, and the function
\begin{equation*}
F_s:=\int_0^{T_0-s}  e^{(T_0-r)\Op}\Big(b(Z_r+\psi_r)-b(Z_r+\phi_r)\Big)\, dr,\quad s\in[0,T_0],
\end{equation*}
for $h=3$, $\eta_1=\eta_2=\eta_3=\rho/2$ and the according choices of $\tau_i$.
\end{proof}

Having \cref{lem:comparison,c:comparison} at hand, we are in a position to prove \cref{thm:main}. After a remark on the a priori estimates of solutions, we proceed by first proving uniqueness under the joint assumptions of \cref{lem:comparison} and \cref{c:comparison}. Then, by verifying the regimes of $\gamma$ and $\alpha$ in which $\rho,\tau$ can be chosen such that these assumptions are fulfilled, we prove \cref{thm:main}.

\begin{remark}\label{r:goodbound}
Note that due to boundedness of $b$, we have the following a priori estimates for any (mild) solution $X$ to \eqref{eq:mainSDE}:
\begin{align}
    [X-Z]_{\mathcal{C}^{1}_{[0,1],H,L^\infty}}&\leqslant \|b\|_\infty \label{eq:apriori}\\
    [X-Z]_{\mathcal{C}^{1-\frac{\rho}{2}}_{[0,1],H^\rho,L^\infty}}&\leqslant C(\rho)\|b\|_\infty,\label{eq:apriori2}
\end{align}
where $\rho \in [0,2)$.
To see \eqref{eq:apriori}, note that for any $(s,t) \in \Delta^2_{[0,1]}$, a.s.,
 \begin{align*}
     \|(X-Z)_t-e^{(t-s)\Op} (X-Z)_s\|_{H}&=\Big\|\int_s^t e^{(t-r)\Op}b(X_r) dr\Big\|_{H}
     %\leqslant \int_s^t \|e^{(t-r)\Op}b\|_\infty dr
     \leqslant \|b\|_\infty (t-s).
\end{align*}
Inequality \eqref{eq:apriori2} also holds as, by \cref{p:basic},
\begin{align}
\|(X-Z)_t-e^{(t-s)\Op} (X-Z)_s\|_{H^\rho}&=\Big\|\int_s^t e^{(t-r)\Op}b(X_r) dr\Big\|_{H^\rho}\nonumber\\
&\leqslant C(\rho)\int_s^t (t-r)^{-\frac{\rho}{2}} \|b\|_\infty\,dr \nonumber\\
&\leqslant C(\rho) (t-s)^{1-\frac{\rho}{2}}\|b\|_\infty.
\end{align}
\end{remark}

\begin{lemma}\label{lem:exuni}
 Let $x_0\in H$, $\alpha \in (0,1]$, $\gamma\geqslant 0$, $b \in \mathcal{C}^\alpha(H,H)$. Let \cref{ass:A,ass:lambdagamma,a:manyineq} hold.
Then there exists a strong solution to the SDE \eqref{eq:mainSDE} and pathwise uniqueness holds.
\end{lemma}
\begin{proof}
% Again, throughout the proof we use the notation $a\lesssim b$ if there exists a constant $C=C(\alpha,\gamma,\rho,\tau,m)>0$, such that $a\leq Cb$. 
%
\textbf{Pathwise uniqueness.}
To show uniqueness, we aim to apply \cref{lem:comparison,c:comparison}. We first prove uniqueness to \eqref{eq:mainSDE} for small enough $T \in (0,1]$. Then proving uniqueness on $[0,1]$ follows by iterating the argument. 

For ease of exposition, we focus on the case $\gamma\leqslant 1$. At the end of the proof we explain which adaptations have to be made for the case $\gamma>1$.

%We set the time regularity in $H$ to be $\tau \in (\rho/2,1]$ and in $H^\rho$ to be $\tau-\rho/2$.
Let $X,Y$ be two solutions to \eqref{eq:mainSDE} starting from the initial condition $x_0 \in H$. Let $K=X-Z$ and $\wt{K}=Y-Z$; i.e.
\begin{align} \label{eq:KtildeK}
K_t= e^{tA}x_0+\int_0^t e^{(t-r)\Op} b(K_r +Z_r) dr, \quad \wt{K}_t = e^{tA}x_0+\int_0^t e^{(t-r)\Op} b(\wt{K}_r +Z_r) dr.
\end{align} 
Recalling \cref{r:goodbound}, by \cref{lem:comparison} with $\theta=0$, we have that
\begin{align} 
&\Big\|\Big\|\int_s^t e^{(t-r)\Op}\Big(b(Z_r+\wt{K}_r)-b(Z_r+K_r)\Big)\, dr\Big\|_{H}\Big\|_{L^m(\Omega)}\nn\\
&\qquad\leqslant C    (t-s)^{1-\frac12 (1+\gamma-\rho)(1-\alpha)}
\|K-\wt{K}\|_{\cC^0_{[s,t],H,L^m}}^{\alpha}\|K-\wt{K}\|_{\cC^0_{[s,t],H^
\rho,L^m}}^{1-\alpha}\nn\\
&\qquad\phantom{\le}+C  (t-s)^{2-\frac{(1+\gamma)(2-\alpha)}{2}+\frac\rho2}  \|K-\wt{K}\|_{\cC^0_{[s,t],H^\rho,L^m}}\nn\\
&\qquad\phantom{\le}+C(t-s)^{1-\frac12 (1+\gamma)(1-\alpha)+\tau}[\wt{K}-K]_{\mathcal{C}^{\tau}_{[s,t],H,L^m}}^\alpha [\wt{K}-K]_{\mathcal{C}^{\tau-\frac\rho2}_{[s,t],H^\rho,L^m}}^{1-\alpha}\label{cornewm}
\end{align}
for $C=C(\|b\|_{\C^\alpha},\alpha,\gamma,\rho,\tau,m)>0$.
All of the norms and seminorms appearing in the above are finite by \eqref{eq:apriori} and \eqref{eq:apriori2}. Note that, as $K_0=\wt{K}_0=x_0$,
\begin{equation*}
\|K-\wt{K}\|_{\C^0_{[s,t],H,L^m}}\le 
\sup_{r\in[0,T]}\|\|(K_r-\wt{K}_r)-(K_0-\wt{K}_0)\|_H\|_{L^m}\le  [K-\wt{K}]_{\cC^\tau_{[0,T],H,L^m}}
\end{equation*}
and similarly
\begin{equation*}
\|K-\wt{K}\|_{\C^0_{[s,t],H^\rho,L^m}}\le [K-\wt{K}]_{\cC^{\tau-\frac\rho2}_{[0,T],H^\rho,L^m}}.
\end{equation*}
Hence, we get for any $(s,t)\in\Delta_{[0,T]}$,
\begin{align*} 
&\Big\|\Big\|\int_s^t e^{(t-r)\Op}\Big(b(Z_r+\wt{K}_r)-b(Z_r+K_r)\Big)\, dr\Big\|_{H}\Big\|_{L^m(\Omega)}\nn\\
&\qquad\leqslant C    (t-s)^{1-\frac12 (1+\gamma-\rho)(1-\alpha)} [\wt{K}-K]_{\mathcal{C}^{\tau}_{[0,T],H,L^m}}^\alpha [\wt{K}-K]_{\mathcal{C}^{\tau-\frac\rho2}_{[0,T],H^\rho,L^m}}^{1-\alpha}\\
&\qquad\quad +C  (t-s)^{2-\frac{(1+\gamma)(2-\alpha)}{2}+\frac\rho2} \|K-\wt{K}\|_{\cC^0_{[0,T],H^\rho,L^m}},
\end{align*}
where we also used that the power of $(t-s)$ of the third summand in \eqref{cornewm} is larger than the power of the first summand.
Dividing by $(t-s)^\tau$ and taking supremum over $(s,t)\in\Delta_{[0,T]}$, we get
\begin{align} \label{eq:Ctau}
[\wt{K}-K]_{\mathcal{C}^{\tau}_{[0,T],H,L^m}}&\le C T^{1-\frac12 (1+\gamma-\rho)(1-\alpha)-\tau}\|K-\wt{K}\|_{\cC^\tau_{[0,T],H,L^m}}^{\alpha}\|K-\wt{K}\|_{\cC^{\tau-\rho/2}_{[0,T],H^
\rho,L^m}}^{1-\alpha}\nonumber\\
&+C  T^{2-\frac{(1+\gamma)(2-\alpha)}{2}+\frac\rho2-\tau}  \|K-\wt{K}\|_{\cC^0_{[0,T],H^\rho,L^m}}
\end{align}
for $C=C(\|b\|_{\C^\alpha},\alpha,\gamma,\rho,\tau,m)>0$.
Here we used that all the exponents of $T$ are positive thanks to our standing \cref{a:manyineq}.

 One can go through the same argument in $H^\rho$ norm. First, by \cref{c:comparison}, up to multiplying the right hand side of \eqref{cornewm} with $(t-s)^{-\rho/2}$, we get the same bound when taking $H^\rho$-norm instead of $H$-norm on the left hand side. Hence, we get the same bound as \eqref{eq:Ctau} for $[\wt{K}-K]_{\cC^{\tau-\rho/2}_{[0,T]},H^\rho,L^m}$. Indeed, as mentioned above, we lose an extra $\rho/2$ in \eqref{cornewm}, but as we divide by $(t-s)^{\tau-\rho/2}$ in place of $(t-s)^\tau$, we ``gain'' $\rho/2$ back. Thus,
\begin{align} \label{mixed}
[\wt{K}-K]_{\mathcal{C}^{\tau-\frac\rho2}_{[0,T],H^\rho,L^m}}&\le C T^{1-\frac12 (1+\gamma-\rho)(1-\alpha)-\tau}\|K-\wt{K}\|_{\cC^\tau_{[0,T],H,L^m}}^{\alpha}\|K-\wt{K}\|_{\cC^{\tau-\rho/2}_{[0,T],H^
\rho,L^m}}^{1-\alpha}\nn\\
&+C  T^{2-\frac{(1+\gamma)(2-\alpha)}{2}+\frac\rho2-\tau}  \|K-\wt{K}\|_{\cC^0_{[0,T],H^\rho,L^m}}
\end{align}
for $C=C(\|b\|_{\C^\alpha},\alpha,\gamma,\rho,\tau,m)>0$.

Combining \eqref{eq:Ctau} and \eqref{mixed}, we get for $\Lambda:=\|K-\wt{K}\|_{\cC^\tau_{[0,T],H,L^m}}^{\alpha}\|K-\wt{K}\|_{\cC^{\tau-\rho/2}_{[0,T],H^\rho,L^m}}^{1-\alpha}$,
$$
\Lambda\le  C T^{1-\frac12 (1+\gamma-\rho)(1-\alpha)-\tau}\Lambda+C  T^{2-\frac{(1+\gamma)(2-\alpha)}{2}+\frac\rho2-\tau}  \|K-\wt{K}\|_{\cC^0_{[0,T],H^\rho,L^m}}.
$$
Since $\Lambda<\infty$ (recall \cref{r:goodbound}), this gives for $T=T(\|b\|_{\C^\alpha},\alpha,\gamma,\rho,\tau,m)>0$ small enough 
\begin{align}\label{prevnorm}
\Lambda\le C T^{2-\frac{(1+\gamma)(2-\alpha)}{2}+\frac\rho2-\tau}  \|K-\wt{K}\|_{\cC^0_{[0,T],H^\rho,L^m}}.
\end{align}
Substituting this back into \eqref{mixed}, we get 
\begin{align*} 
[\wt{K}-K]_{\mathcal{C}^{\tau-\frac\rho2}_{[0,T],H^\rho,L^m}}&\le C  T^{2-\frac{(1+\gamma)(2-\alpha)}{2}+\frac\rho2-\tau}  \|K-\wt{K}\|_{\cC^0_{[0,T],H^\rho,L^m}}\\
&\leqslant C T^{2-\frac{(1+\gamma)(2-\alpha)}{2}}[\wt{K}-K]_{\mathcal{C}^{\tau-\frac\rho2}_{[0,T],H^\rho,L^m}}
\end{align*}
and thus for $T=T(\|b\|_{\C^\alpha},\alpha,\gamma,\rho,\tau,m)>0$ small enough, 
\begin{align*} 
[\wt{K}-K]_{\mathcal{C}^{\tau-\frac\rho2}_{[0,T],H^\rho,L^m}}=0,
\end{align*}
which completes the proof for $\gamma \leqslant 1$.

The proof for $\gamma>1$ works in precisely the same way; one only has to replace $2-\frac{(1+\gamma)(2-\alpha)}{2}+\frac{\rho}{2}$ with $\alpha+1-\frac{1+\gamma}{2}+\frac{\rho}{2}$ and use the second part of condition \eqref{eqB} to ensure that this exponent is larger than $\tau$.

\textbf{Strong existence:} Again, we only prove the case $\gamma\leqslant 1$. We give a fixed point argument.

For $t\geqslant 0$, $\beta\in(0,1]$, we define $\cL^{\beta,\rho,m}_{[0,t]}$ as the space of stochastic processes $\phi\colon\Omega\times[0,1]\to H$ adapted to the complete filtration generated by $W$ fulfilling $[\phi]_{\cC^{\beta}_{[0,t],H^\rho,L^m}}<\infty$ and a.s. $\phi_0=x_0$. We set up a contraction, for $\tilde{t}>0$ small enough, on 
\begin{equation*}
\mathcal{S}_{\tilde{t}}\coloneqq\{\phi \in \cL^{1,0,\infty}_{[0,\tilde{t}]}\cap \cL^{1-\frac{\rho}{2},\rho,\infty}_{[0,\tilde{t}]}:[\phi]_{\cC^{1}_{[0,\tilde{t}],H,L^\infty}}\leqslant  \|b\|_\infty,[\phi]_{\cC^{1-\frac{\rho}{2}}_{[0,\tilde{t}],H^\rho,L^\infty}}\leqslant \Gamma\|b\|_\infty\},
\end{equation*}
where $\Gamma$ is the constant in \eqref{eq:apriori2}. We consider the above as a closed subspace of 
\begin{equation*}
\Big(\cL^{\tau,0,m}_{[0,\tilde{t}]}\cap \cL^{\tau-\frac{\rho}{2},\rho,m}_{[0,\tilde{t}]},\|\cdot\|_{\cC^{\tau-\frac{\rho}{2}}_{[0,\tilde{t}],H^\rho,L^m}}+\|\cdot\|_{\cC^{\tau}_{[0,\tilde{t}],H,L^m}}\Big).
\end{equation*}
Note that this subspace is nonempty since $\varphi_s=e^{s\Op}x_0$ is an element thereof. Let $\tilde{t} \in [0,T]$.
Then define $\Psi\colon \mathcal{S}_{\tilde{t}}\to \cL^{\tau,0,m}_{[0,\tilde{t}]}\cap \cL^{\tau-\frac{\rho}{2},\rho,m}_{[0,\tilde{t}]}$ by
\begin{align*}
    \Psi(\phi)_t := e^{t\Op}x_0+\int_0^t e^{(t-r)\Op}b(Z_r+\phi_r) \, dr.
\end{align*}
Since, $[\Psi(\phi)]_{\cC^1_{[0,\tilde{t}],H,L^\infty}}\leqslant \|b\|_\infty$ and $[\Psi(\phi)]_{\cC^{1-\rho/2}_{[0,\tilde{t}],H^\rho,L^\infty}}\leqslant \Gamma\|b\|_\infty$, we have
\begin{equation*}
\Psi(\mathcal{S}_{\tilde{t}})\subset \mathcal{S}_{\tilde{t}}.
\end{equation*}
Hence, we can proceed as in the previous part on uniqueness and, using that $\phi_0-\psi_0=0$ a.s., we obtain that
\begin{align*}
[\Psi(\phi)-\Psi(\psi)]_{\mathcal{C}^{\tau}_{[0,\tilde{t}],H,L^m}}&\le C {\tilde{t}}^{1-\frac12 (1+\gamma-\rho)(1-\alpha)-\tau}\|\phi-\psi\|_{\cC^\tau_{[0,\tilde{t}],H,L^m}}^{\alpha}\|\phi-\psi\|_{\cC^{\tau-\rho/2}_{[0,\tilde{t}],H^
\rho,L^m}}^{1-\alpha}\nonumber\\
&\quad +C  {\tilde{t}}^{2-\frac{(1+\gamma)(2-\alpha)}{2}+\frac\rho2-\tau}  \|\phi-\psi\|_{\cC^0_{[0,\tilde{t}],H^\rho,L^m}};\\
[\Psi(\phi)-\Psi(\psi)]_{\mathcal{C}^{\tau-\frac{\rho}{2}}_{[0,\tilde{t}],H^\rho,L^m}}&\le C {\tilde{t}}^{1-\frac12 (1+\gamma-\rho)(1-\alpha)-\tau}\|\phi-\psi\|_{\cC^\tau_{[0,\tilde{t}],H,L^m}}^{\alpha}\|\phi-\psi\|_{\cC^{\tau-\rho/2}_{[0,\tilde{t}],H^
\rho,L^m}}^{1-\alpha}\nonumber\\
&\quad +C  {\tilde{t}}^{2-\frac{(1+\gamma)(2-\alpha)}{2}+\frac\rho2-\tau}  \|\phi-\psi\|_{\cC^0_{[0,\tilde{t}],H^\rho,L^m}}.
\end{align*}
Using that, for $a,b\geqslant 0$, $a^\alpha b^{1-\alpha}\leqslant a+b$, we get
\begin{align*}
    [\Psi(\phi)-\Psi&(\psi)]_{\mathcal{C}^{\tau}_{[0,\tilde{t}],H,L^m}}+[\Psi(\phi)-\Psi(\psi)]_{\mathcal{C}^{\tau-\frac{\rho}{2}}_{[0,\tilde{t}],H^\rho,L^m}}\\
    &\leqslant C {\tilde{t}}^{1-\frac12 (1+\gamma-\rho)(1-\alpha)-\tau}(\|\phi-\psi\|_{\cC^\tau_{[0,\tilde{t}]},H,L^m}+\|\phi-\psi\|_{\cC^{\tau-\rho/2}_{[0,\tilde{t}],H^
\rho,L^m}})\nonumber\\
&\quad +C  {\tilde{t}}^{2-\frac{(1+\gamma)(2-\alpha)}{2}}  [\phi-\psi]_{\cC^{\tau-\frac{\rho}{2}}_{[0,\tilde{t}],H^\rho,L^m}}.
\end{align*}
We see that thanks to \cref{a:manyineq}, all the exponents in the above inequality are positive. Therefore, for $\tilde{t}$ small enough, we get 
\begin{align*}
    [\Psi(\phi)-\Psi(\psi)]_{\mathcal{C}^{\tau}_{[0,\tilde{t}],H,L^m}}+[\Psi(\phi)-\Psi(\psi)]_{\mathcal{C}^{\tau-\frac{\rho}{2}}_{[0,\tilde{t}],H^\rho,L^m}}\leqslant \frac{1}{2}\Big([\phi-\psi]_{\mathcal{C}^{\tau}_{[0,\tilde{t}],H,L^m}}+[\phi-\psi]_{\mathcal{C}^{\tau-\frac{\rho}{2}}_{[0,\tilde{t}],H^\rho,L^m}}\Big)
\end{align*}
and therefore $\Psi$ defines a contraction on $\mathcal{S}_{\tilde{t}}$. Hence, we can apply Banach's fixed point theorem and obtain a fixed point $\phi\in\mathcal{S}_{\tilde{t}}$.
In order to show that we get a solution in the sense of \cref{def:sol}, it remains to find a continuous modification of $\varphi$. This modification is simply given by $\Psi(\varphi)$: Since it is an integral, it is continuous, and since $\Psi(\varphi)=\varphi$ in $\mathcal{S}_{\tilde{t}}$, it is a modification. Therefore, $\Psi(\varphi)+Z$ is a strong solution on $[0,\tilde{t}]$.
%In order to obtain a (unique) strong solution $\tilde{\phi}+Z$ on the interval $[0,\tilde{t}]$, it solely remains to identify a fixed point that is a continuous process a.s. To do so, take a fixed point $\phi$. Then, as it is defined via an integral, $\Psi(\phi)$ is a continuous process. Moreover, $\Psi(\phi)$ is a fixed point as well. This follows immediately by considering $\Psi(\Psi(\phi))-\Psi(\phi)$ in the corresponding  norms and using that for $t \in [0,\tilde{t}]$, $\phi_t=\Psi(\phi)_t$ a.s. 
Since the point $\tilde{t}$ does not depend on the initial data $x_0$, repeating this argument $\lceil 1/\tilde{t}\rceil$ times, we get a strong solution on the full interval $[0,1]$.
\end{proof}

The purpose of the following lemma is to demonstrate that our main assumption \eqref{eq:gammaupperbound} on the relation of the exponents corresponds precisely to the required inequalities in \cref{a:manyineq}. This allows to use \cref{lem:exuni} to prove \cref{thm:main}.

\begin{lemma}\label{lem:exponents}
    Let $\gamma\geqslant 0$ and $\alpha\in(0,1]$. Then the additional conditions $\gamma<3$ and \eqref{eq:gammaupperbound} are equivalent to the existence of $\tau\in(0,1]$ and $\rho\in(0,\min(\gamma,2\tau])$ such that \eqref{eqA}, \eqref{eqB}, and \eqref{eqC} are satisfied. Moreover, one can also satisfy the inequalities with the additional constraint $\rho<2\tau$.
\end{lemma}
\begin{proof}
    The last statement of the claim is trivial, since all inequalities are strict.
    For the first statement, we start by a few simplifying remarks. Since $\rho\leqslant 2\tau$, $\rho/2$ can be omitted from the maximum in \eqref{eqA}. Furthermore, $\tau\geqslant 1/2$ can be assumed without loss of generality, since increasing $\tau$ does not affect \eqref{eqB}, makes \eqref{eqC} as well as the condition $\rho\leqslant2\tau$ easier to satisfy, and setting $\tau=1/2$ instead of  $\tau<1/2$ also does not affect \eqref{eqA}. Henceforth we restrict our attention to $\tau\in[1/2,1]$ and replace \eqref{eqA} by
    \begin{equ}
        1-\frac12 (1+\gamma-\rho)(1-\alpha)>\tau\label{eqD}.
    \end{equ}
    Finally, at this point increasing $\rho$ makes \eqref{eqB} and \eqref{eqD} easier to satisfy and does not affect \eqref{eqC}, so we can also fix taking the maximal possible choice $\rho=\min(\gamma,2\tau)$.

    First, we consider the case $\gamma\in[0,1]$. By the above remarks, this enforces $\rho=\gamma$, leading to the simpler system
    \begin{equs}
        1-\frac{1-\alpha}{2}>\tau \label{eq:first}
        \\
        2-\frac{(1+\gamma)(2-\alpha)-\gamma}{2}>1 \label{eq:second}
        \\
        1-\frac{(1+\gamma)(1-\alpha)}{2}+\tau>1. \label{eq:third}
    \end{equs}
    The left-hand side of \eqref{eq:second} can be rewritten as $\frac{3}{2}-\frac{(1+\gamma)(1-\alpha)}{2}$, which is  smaller than the left-hand side of \eqref{eq:third} (recall that $\tau\geqslant1/2$), which therefore can be omitted. At this point decreasing $\tau$ is beneficial, leading to setting $\tau=1/2$, which renders that \eqref{eq:first} is automatically fulfilled since $\alpha>0$. We are left with only one inequality, which after rearrangement is seen to be equivalent to $\alpha>\frac{\gamma}{\gamma+1}$, which is exactly \eqref{eq:gammaupperbound} in the regime $\gamma\in[0,1]$.

    We now turn to the case $\gamma>1$, still assuming $\tau \geqslant 1/2$, and discuss the two possibilities $\gamma\lessgtr2\tau$. If we choose $\tau\geqslant\gamma/2$ (which is only possible if $\gamma\leqslant2)$, then as before this implies $\rho=\gamma$, leading to the system
    \begin{equs}
        1-\frac{1-\alpha}{2}>\tau \label{eq:firstalt}
        \\
        1+\alpha-\frac{1}{2}>1 \label{eq:secondalt}
        \\
        1-\frac{(1+\gamma)(1-\alpha)}{2}+\tau>1.\label{eq:thirdalt}
    \end{equs}
    Inequality \eqref{eq:secondalt} simply enforces $\alpha>1/2$. The existence of a $\tau$ that satisfies \eqref{eq:firstalt}, \eqref{eq:thirdalt} as well as $\tau\geqslant\gamma/2$ then requires
    \begin{equ}
        \alpha+1>(1+\gamma)(1-\alpha),\qquad 1+\alpha>\gamma.
    \end{equ}
    The first of these holds as $\alpha>1/2$ and $\gamma\leqslant2$. The second leads to the condition
    \begin{equ}\label{eqX}
        \alpha>\max\Big(\frac{1}{2},\gamma-1\Big).
    \end{equ}
    On the other hand, we may choose $\tau<\gamma/2$, which then implies $\rho=2\tau$ as we fixed $\rho=\min(\gamma,2\tau)$. This leads to the system
    \begin{equs}
        1-\frac{(1+\gamma-2\tau)(1-\alpha)}{2}>\tau
        \\
        1+\alpha-\frac{1+\gamma}{2}+\tau>1
        \\
        1-\frac{(1+\gamma)(1-\alpha)}{2}+\tau>1.
    \end{equs}
    Since we are in the regime $\gamma>1$, the second inequality is a strictly stronger constraint than the third one, which therefore can be omitted.
    Rearranging, we are left to find $\tau$ such that
    \begin{equ}
        \frac{1+\gamma}{2}-\alpha<\tau<\min\Big(\frac{\gamma}{2},\frac{1}{\alpha}\Big(1-\frac{(1+\gamma)(1-\alpha)}{2}\Big)\Big),
    \end{equ}
    which is possible if and only if the leftmost term is smaller than the minimum in the rightmost term.
The first term in the minimum simply leads to the constraint $\alpha>1/2$. The second term, after rearrangement, leads to a quadratic inequality, resulting in the condition
\begin{equ}
    \alpha>\max\Big(\frac{1}{2},\sqrt{\frac{\gamma-1}{2}}\Big),
\end{equ}
which is a more generous condition than \eqref{eqX}, requires $\gamma<3$, and is equivalent to \eqref{eq:gammaupperbound} in the regime $1<\gamma<3$. This finishes the proof.    
\end{proof}

\begin{proof}[Proof of \cref{thm:main}]
Applying \cref{lem:exponents} allows us to choose $\tau$ and $\rho$ satisfying \cref{a:manyineq}. Hence, pathwise uniqueness and existence of a strong solution follows by \cref{lem:exuni}. 

% old: \cref{ex} we get the existence of a strong solution in \red{--the space stated there--}, which is clearly a solution also in the sense of \cref{def:sol}. As for pathwise uniqueness, take two weak solutions $X$ and $\overline X$ \red{... they are actually not just H-valued but also Lemma (uniqueness) applies to them so QED...}
% \lukas[inline]{Don't see why it is necessary to say these things, especially to clarify in which precise space the solution lives.}
% \mate[inline]{Otherwise what guarantees that the RHS of \eqref{cornewm} is finite?}
\end{proof}

\bibliographystyle{alpha}%{plainnat}%{abbrv}
\bibliography{hilbertspace}

\end{document}